\newtheorem{theorem}{Theorem}[section]
\newtheorem{lemma}[theorem]{Lemma}
\newtheorem{proposition}[theorem]{Proposition}
\newtheorem{corollary}[theorem]{Corollary}
\theoremstyle{definition}
\newtheorem{definition}[theorem]{Definition}
\theoremstyle{remark}
\newtheorem{remark}[theorem]{Remark}
\numberwithin{equation}{section}
	\title[ MacLane-Vaqui\'e chains  and Valuation-Transcendental Extensions]{MacLane-Vaqui\'e chains  and Valuation-Transcendental Extensions}
	\author[Sneha Mavi]{Sneha Mavi}
	\address{Department of Mathematics\\ University of Delhi\\  Delhi-110007, India.}
	\email{mavisneha@gmail.com}
	\author[Anuj Bishnoi]{Anuj Bishnoi$^\ast$}
	\address{Department of Mathematics\\  University of Delhi \\   Delhi-110007, India.}
	\email{abishnoi@maths.du.ac.in}
		\thanks{$^\ast$Corresponding author, E-mail address: abishnoi@maths.du.ac.in}
		\date{09,19,2022}
		\keywords{Abstract key polynomials, key polynomials,   MacLane-Vaqui\'e  chains}
	\subjclass[2020]{12F20, 12J10,   13A18}
\begin{document}
	\begin{abstract}
	In this paper, 	for a  valued field $(K,v)$ of arbitrary rank  and an extension $w$ of $v$ to $K(X),$  we  give a connection  between complete sets of ABKPs for   $w$   and MacLane-Vaqui\'e chains  of $w.$ 
	\end{abstract}

	\maketitle
	%------------------------------------------------------------------------------
	
	\section{Introduction }    
	  In  1936,   MacLane \cite{M}, proved that an extension $w$ of a discrete rank one valuation  $v$ of $K$ to $K[X]$ can be obtained as a chain of augmentations
	 \begin{align*}
	 	w_0\xrightarrow{\phi_1,\gamma_1} w_1\xrightarrow{\phi_2,\gamma_2}\cdots \longrightarrow w_{n-1}\xrightarrow{\phi_n,\gamma_n} w_n\longrightarrow\cdots
	 \end{align*}
for some suitable  key polynomials $\phi_i\in K[X]$ with respect to the  intermediate valuations, and  values $\gamma_{i}$ belonging to  some  totally ordered abelian group containing the value group of $v$ as an ordered subgroup. Later,   Vaqui\'e generalized MacLane's theory to arbitrary valued fields (see \cite{V}). 
	Recently,  Nart gave a survey of generalized MacLane-Vaqui\'e  theory   in \cite{EN} and \cite{EN1}. 
	Starting with a valuation $w_0$ which admit key polynomials of degree one, Nart also introduced {\bf MacLane-Vaqui\'e  chains}, consisting of a mixture of ordinary and limit augmentations  (see Definition \ref{1.1.13}). The main result,  Theorem 4.3  of \cite{EN1} says that, all extensions  $w$  of $v$ to $K[X]$ fall exactly in one of the following categories:
	\begin{enumerate}[(i)]
		\item It is the last valuation of a complete finite MacLane-Vaqui\'e chain, i.e., after a finite number $r$ of augmentation steps, we get $w_r=w.$
		\item After a finite number $r$ of augmentation steps, it is the stable limit of a continuous family of augmentations  of $w_r$ defined by key polynomials of constant degree.
		\item  It is the stable limit of a complete infinite MacLane-Vaqui\'e chain.
	\end{enumerate}
	    In this paper, we  study  MacLane-Vaqui\'e chains of the first type and prove that a precise complete finite  MacLane-Vaqui\'e chain can be obtained   using a given complete set $\{Q_i\}_{i\in\Delta}$ of ABKPs for $w$ such that $\Delta$ has a last element. Conversely, if $w$ is the last valuation of some complete finite   MacLane-Vaqui\'e chain, then it is possible to derive, from the data of this chain, a complete set $\{Q_i\}_{i\in\Delta}$ of ABKPs for $w$ such that $\Delta$ has a last element.
	
	To state the main results of the paper,  we first recall some notation, definitions and  preliminary results.

	\section{Background and notation}
	
	Throughout  $(K,v)$ denote a    valued field of arbitrary rank  with value group $\Gamma_ v,$   residue field  $k_{v}$ and by $\bar{v}$ we denote  an extension of $v$ to a fixed algebraic closure $\overline{K}$ of $K.$ 
	
	\medskip
An  extension $w$ of $v$ to the simple transcendental extension $K(X)$ of $K$    is said to be {\bf value-transcendental}  if the quotient $\Gamma_{w}/\Gamma_v$ is a torsion-free group and the corresponding residue field extension $k_w|k_v$ is algebraic.    The extension $w$ of $v$ to $K(X)$ is called {\bf residually transcendental} (abbreviated as r.\ t.)  if  $\Gamma_{w}/\Gamma_v$ is a torsion group and
		 $k_{w}| k_v$ is transcendental.
		We call $w$  {\bf valuation-transcendental} if it is either value-transcendental or is residually transcendental.

	 We fix a common extension $\overline{w}$ of $w$ and $\bar{v}$ to $\overline{K}(X).$
 By \cite[Lemma 3.3]{FV-K},  $\overline{w}$  is valuation-transcendental if and only if $w$ is valuation-transcendental.

	\begin{definition}
		For any valuation $w'$ on $K(X)$ taking values in a subgroup of $\Gamma_{w},$ we say that $w'\leq w$ if and only if 
		$$w'(f)\leq w(f),~\forall f\in K[X].$$
		If $w'<w,$ we denote  by $\Phi(w',w)$ the set of all monic polynomials $g\in K[X]$ of minimal degree such that $w'(g)<w(g).$
	\end{definition}
	\begin{definition}\label{1.1.2}
		A pair $(\alpha,\delta)$ in $\overline{K}\times \Gamma_{\overline{w}}$ is called a {\bf  pair of definition} for $w$  if 
	 $\overline{w}=\overline{w}_{\alpha,\delta},$  where $\overline{w}_{\alpha,\delta}$ is a valuation on $\overline{K}[X]$ defined by 
$$\overline{w}_{\alpha,\delta}\left(\sum_{i\geq 0} c_i (X-\alpha)^i\right):=\min_{i\geq 0}\{\bar{v}(c_i)+i\delta\}, \, c_i\in\overline{K}.$$
	\end{definition}
%%%%%%%%%%%%%%%%%%%%%%%%%%%%%%%%%%%%%%%%%%%%%%%

\begin{definition}
For any polynomial $f$ in $K[X],$ we denote
$$\delta(f)=\max\{\overline{w}(X-\alpha)\mid f(\alpha)=0\}.$$ 
This value $\delta(f)$ is independent of the choice of $\overline{w}$  \cite[Proposition 3.1]{JN}.
\end{definition}

	\begin{definition}[\bf Abstract key polynomials]
		A monic polynomial $Q$ in $K[X]$ is said to be an {\bf abstract key polynomial}  (abbreviated as ABKP) for $w$ if for each polynomial $f$ in $K[X]$   with $\deg f< \deg Q$ we have $\delta(f)<\delta(Q).$
	\end{definition}
	It is immediate from the definition that all monic  linear polynomials are ABKPs for $w.$  Also an ABKP for $w$ is an irreducible polynomial (see   \cite[Proposition 2.4]{NS}).
	\begin{definition}
		For a polynomial $Q$ in $K[X]$ the {\bf $Q$-truncation} of $w$ is a map $w_Q:K[X]\longrightarrow \Gamma_w$ defined by 
		$$ w_Q(f):= \min_{0\leq i\leq n}\{w(f_iQ^i)\},$$
		where $f=\sum_{i=0}^{n} f_i Q^i,$ $\deg f_i <\deg Q,$  is the $Q $-expansion of $f.$ 
	\end{definition}
	The $Q$-truncation  $w_Q$  of $w$ need not be a valuation \cite[Example 2.5]{NS}. However,  if $Q$ is an ABKP for $w,$ then $w_Q$ is a valuation on $K(X)$ (see \cite[Proposition 2.6]{NS}). Also any ABKP, $Q$ for $w,$ is also an ABKP for the truncation valuation $w_Q.$

\begin{lemma}[Lemma 2.11, \cite{NS}] \label{1.2.6}
	If $Q$ is an ABKP for $w,$ then every element $F\in\Phi(w_Q,w)$ is also an ABKP for $w$ and $\delta(Q)<\delta(F).$
\end{lemma}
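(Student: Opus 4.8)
The plan is to reduce both assertions to the single equivalence
\[
w_Q(f) < w(f) \iff \delta(f) > \delta(Q), \qquad f\in K[X],
\]
which I will call $(\star)$. Note first that $w_Q\le w$ always, since $w_Q(f)=\min_i w(f_iQ^i)\le w\bigl(\sum_i f_iQ^i\bigr)=w(f)$. Granting $(\star)$, the inequality $\delta(Q)<\delta(F)$ is immediate, because $F\in\Phi(w_Q,w)$ means precisely $w_Q(F)<w(F)$. For the assertion that $F$ is an ABKP for $w$, take $g\in K[X]$ with $\deg g<\deg F$; after scaling by an element of $K^{\times}$ (which alters neither the truth of $w_Q(g)=w(g)$ nor the value $\delta(g)$) we may assume $g$ is monic. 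Since $w_Q=w$ on polynomials of degree $<\deg Q$, every element of $\Phi(w_Q,w)$ has degree $\ge\deg Q$, and minimality of $\deg F$ forces $w_Q(g)=w(g)$. By $(\star)$ this gives $\delta(g)\le\delta(Q)$, whence $\delta(g)\le\delta(Q)<\delta(F)$, as required.

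To prove $(\star)$ I will pass to a pair of definition $(\alpha,\delta)$ for $\overline{w}$, so that $\overline{w}=\overline{w}_{\alpha,\delta}$. Factoring a monic $f$ as $\prod_{\beta}(X-\beta)$ over $\overline{K}$ and using the multiplicativity of $\overline{w}_{\alpha,\delta}$ together with $\overline{w}(X-\beta)=\min\{\delta,\bar v(\alpha-\beta)\}$, one obtains the explicit formulas
\[
w(f)=\sum_{\beta}\min\{\delta,\bar v(\alpha-\beta)\},\qquad \delta(f)=\min\Bigl\{\delta,\ \max_{\beta}\bar v(\alpha-\beta)\Bigr\},
\]
the sum and maximum running over the roots $\beta$ of $f$ with multiplicity. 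The crux is then the identification
\[
w_Q(f)=\overline{w}_{\alpha,\delta(Q)}(f)=\sum_{\beta}\min\{\delta(Q),\bar v(\alpha-\beta)\},
\]
valid for every ABKP $Q$. Granting it, the hypothesis $w_Q<w$ of the lemma forces $\delta(Q)<\delta$ (equality $\delta(Q)=\delta$ would make the two sums coincide for all $f$), and $(\star)$ follows by a termwise comparison: since $\delta(Q)<\delta$, one has $\min\{\delta(Q),x\}<\min\{\delta,x\}$ exactly when $x>\delta(Q)$, so $w_Q(f)<w(f)$ holds iff some root satisfies $\bar v(\alpha-\beta)>\delta(Q)$, iff $\max_{\beta}\bar v(\alpha-\beta)>\delta(Q)$, iff $\delta(f)>\delta(Q)$.

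The main obstacle is establishing this identification of $w_Q$ with the minimal-pair valuation at level $\delta(Q)$. My plan is to choose a root $\beta_0$ of $Q$ realizing $\delta(Q)=\bar v(\alpha-\beta_0)$ and to verify by an ultrametric computation that $\overline{w}_{\alpha,\delta(Q)}=\overline{w}_{\beta_0,\delta(Q)}$. Since $Q$ is an ABKP for $w$, for every $g$ with $\deg g<\deg Q$ each root $\gamma$ of $g$ has $\bar v(\alpha-\gamma)<\delta(Q)$, hence $\bar v(\beta_0-\gamma)=\bar v(\alpha-\gamma)<\delta(Q)$; thus $(\beta_0,\delta(Q))$ is a minimal pair whose associated minimal polynomial is $Q$. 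By the minimal-pair/key-polynomial correspondence, $Q$ is then a key polynomial for $w':=\overline{w}_{\beta_0,\delta(Q)}|_{K[X]}$ and $w'$ equals its own $Q$-truncation, that is, $w'\bigl(\sum_i f_iQ^i\bigr)=\min_i\bigl(w'(f_i)+i\,w'(Q)\bigr)$ for every $Q$-expansion. Finally I would check, via the displayed formulas and the ABKP property of $Q$ (which places the relevant roots within distance $\delta(Q)$ of $\alpha$), that $w'$ agrees with $w$ on each coefficient $f_i$ of degree $<\deg Q$ and on $Q$ itself, so that the right-hand side equals $\min_i\bigl(w(f_i)+i\,w(Q)\bigr)=w_Q(f)$. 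The delicate point throughout is ruling out cancellation in the $Q$-expansion when evaluating $w'$, which is exactly what the key-polynomial property of $Q$ for $w'$ supplies.
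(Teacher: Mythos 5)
The paper does not actually prove this statement: it is imported verbatim as Lemma 2.11 of \cite{NS}, where the argument is carried out inside $K[X]$ via the derivative-based invariant $\epsilon$ and properties of truncations. Your route through a pair of definition and minimal pairs over $\overline{K}$ is therefore a genuinely different one, and most of it checks out: the termwise comparison of $\sum_\beta\min\{\delta(Q),\bar v(\alpha-\beta)\}$ with $\sum_\beta\min\{\delta,\bar v(\alpha-\beta)\}$ does give your equivalence $(\star)$; the verification that $(\beta_0,\delta(Q))$ is a minimal pair is a correct use of the ABKP hypothesis; the equalities $w'(f_i)=w(f_i)$ and $w'(Q)=w(Q)$ hold for the reasons you indicate; and the two conclusions of the lemma do follow formally from $(\star)$. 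Be aware, though, that the identification of $w_Q$ with $\overline{w}_{\beta_0,\delta(Q)}|_{K[X]}$ rests entirely on the minimal-pair/$Q$-expansion theorem, which you invoke as a black box; that theorem is the real content of your argument, so the proof is only as self-contained as that citation.

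The genuine gap is your very first move: passing to a pair of definition $(\alpha,\delta)$ with $\overline{w}=\overline{w}_{\alpha,\delta}$. Such a pair exists if and only if $\overline{w}$ (equivalently $w$) is valuation-transcendental, whereas the lemma is stated for an arbitrary extension $w$ of $v$ to $K(X)$; for a valuation-algebraic $w$ there is no such $\alpha$, and your explicit formulas for $w(f)$ and $\delta(f)$ are meaningless. The gap is repairable by eliminating $\alpha$ altogether: choose a root $\beta_0$ of $Q$ with $\overline{w}(X-\beta_0)=\delta(Q)$ and compare $\overline{w}(X-\beta)$ directly with $\overline{w}_{\beta_0,\delta(Q)}(X-\beta)=\min\{\delta(Q),\bar v(\beta_0-\beta)\}$. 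The ultrametric inequality applied to $X-\beta=(X-\beta_0)+(\beta_0-\beta)$ gives $\overline{w}(X-\beta)\geq\min\{\delta(Q),\bar v(\beta_0-\beta)\}$, with strict inequality exactly when $\overline{w}(X-\beta)>\delta(Q)$, which is all your termwise comparison needs; the minimal-pair verification and the equalities on coefficients go through with $\beta_0$ in place of $\alpha$ by the same ultrametric manipulations. As written, however, your proof establishes the lemma only for valuation-transcendental $w$ --- enough for the applications in this paper, but not for the statement as given.
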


	\begin{definition}
		A family $\Lambda=\{Q_i\}_{i\in\Delta}$ of ABKPs for $w$ is said to be a {\bf complete set of ABKPs} for $w$ if the following conditions are satisfied:
		\begin{enumerate}[(i)]
			\item $\delta(Q_i)\neq \delta(Q_j)$ for every $i\neq j\in\Delta.$
			\item $\Lambda$ is well-ordered with respect to the ordering given by $Q_i< Q_j$ if and only if  $\delta(Q_i)<\delta(Q_j)$ for every $i<j\in \Delta.$ 
			\item For any $f\in K[X],$ there exists some $Q_i \in \Lambda$ such that $\deg Q_i\leq \deg f$ and  $w_{Q_i}(f)=w(f).$
		\end{enumerate}
\end{definition}

\begin{theorem}[Theorem 1.1, \cite {NS}]
	Every valuation $w$ on $K(X)$ admits a complete set of ABKPs for $w.$  
\end{theorem}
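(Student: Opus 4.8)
The plan is to construct the desired family $\Lambda=\{Q_i\}_{i\in\Delta}$ by transfinite recursion, arranging that the associated truncations $w_{Q_i}$ form an increasing chain of valuations that approximate $w$ from below, with strictly increasing $\delta$-values. The guiding principle is that $w_Q$ is a genuine valuation whenever $Q$ is an ABKP, that $w_Q\le w$ always, and that the gap between $w_Q$ and $w$ is detected by $\Phi(w_Q,w)$; closing these gaps one at a time should push the truncations up to $w$, at which point completeness is forced.

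For the initialization I would take $Q_0$ to be any monic linear polynomial, which is automatically an ABKP. For the successor step, suppose an ABKP $Q_i$ has been selected. If $w_{Q_i}=w$ on all of $K[X]$ there is nothing left to do; otherwise $w_{Q_i}<w$, the set $\Phi(w_{Q_i},w)$ is nonempty, and I would choose $Q_{i+1}\in\Phi(w_{Q_i},w)$. By \Cref{1.2.6}, $Q_{i+1}$ is again an ABKP for $w$ with $\delta(Q_i)<\delta(Q_{i+1})$, so the $\delta$-values strictly increase and, by construction, $w_{Q_i}\le w_{Q_{i+1}}$. A short argument shows degrees are non-decreasing along this order: if $Q,Q'$ are ABKPs with $\delta(Q)<\delta(Q')$ then $\deg Q\le\deg Q'$, since otherwise $\deg Q'<\deg Q$ would force $\delta(Q')<\delta(Q)$ directly from the definition of an ABKP. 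At a limit ordinal $\lambda$ I would pass to the stable limit $w_\lambda$ of the chain $\{w_{Q_i}\}_{i<\lambda}$; if $w_\lambda<w$, I would select a limit key polynomial $Q_\lambda$, namely a monic polynomial of least degree on which $w_\lambda$ and $w$ disagree, and verify that it is an ABKP whose $\delta$-value exceeds all earlier ones.

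To see that the recursion terminates, note that the values $\delta(Q_i)$ form a strictly increasing transfinite sequence in the totally ordered \emph{set} $\Gamma_{\overline w}$, so its length is bounded and the process must stop at some ordinal, giving a well-ordered index set $\Delta$. This immediately yields conditions (i) and (ii). Termination can only occur when the current (stable limit of the) truncation equals $w$, which is exactly what delivers completeness: for a given $f\in K[X]$, the monotonicity of the truncations together with the monotonicity of degrees in the $\delta$-order shows that the ABKPs of degree $\le\deg f$ form an initial segment of $\Lambda$, and I would argue that the appropriate element of this segment already realizes $w(f)=w_{Q_i}(f)$ with $\deg Q_i\le\deg f$, establishing condition (iii).

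The main obstacle is the limit step. One must show that the stable limit $w_\lambda$ is genuinely a valuation and is attained, i.e. that $w_{Q_i}(f)$ stabilizes for each fixed $f$, and, when $w_\lambda\neq w$, that a limit key polynomial exists and has strictly larger degree than the $Q_i$ preceding it. This is precisely the phenomenon of a continuous family of augmentations of constant degree whose stable limit requires a new, higher-degree key polynomial, as in the MacLane--Vaqui\'e picture recalled in the introduction. The successor steps are controlled cleanly by \Cref{1.2.6}, but verifying the existence and the ABKP-property of limit key polynomials, and confirming that the terminal truncation actually reaches $w$, is where the real work lies.
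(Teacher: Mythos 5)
The paper does not prove this statement; it is imported verbatim from \cite{NS}, and the only trace of its proof here is the structural description in Remark \ref{1.1.3}. Your architecture (a transfinite chain of truncations, successor steps via $\Phi(w_{Q_i},w)$, new degrees entered either by an ordinary step or by a limit key polynomial) matches that description, and your successor step is sound: Lemma \ref{1.2.6} does give that any $Q_{i+1}\in\Phi(w_{Q_i},w)$ is an ABKP with $\delta(Q_i)<\delta(Q_{i+1})$, your monotonicity-of-degrees observation is correct, and your termination argument via the boundedness of strictly increasing ordinal-indexed sequences in the set $\Gamma_{\overline w}$ is fine. Your completeness argument for condition (iii) in the successor/stable cases also goes through, because Proposition \ref{2.1.6}(iii) shows that if $w_{Q_i}(f)=w_{Q_{i'}}(f)$ for some $i<i'$ then this common value is already $w(f)$; you gesture at this but it should be said.

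The genuine gap is the one you yourself flag: the limit step. At a limit ordinal $\lambda$ the only problematic case is when the degrees $\deg Q_i$, $i<\lambda$, are eventually constant, equal to some $m$, and some polynomial is unstable (if the degrees are unbounded, every $f$ eventually has degree smaller than the minimal degree of disagreement and is therefore stable with stable value $w(f)$, so the chain already exhausts $w$; if every polynomial is stable the same conclusion holds). In the problematic case you must (a) take a monic unstable polynomial $Q_\lambda$ of minimal degree $m_\infty>m$ and \emph{prove} that it is an ABKP for $w$ with $\delta(Q_\lambda)>\delta(Q_i)$ for all $i<\lambda$, and (b) show that every polynomial of degree less than $m_\infty$ is stable with stable value $w(f)$, so that condition (iii) of completeness is witnessed inside the segment $\{Q_i\}_{i<\lambda}$. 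Item (a) is not a formal consequence of anything you have set up: $Q_\lambda$ does not lie in any single $\Phi(w_{Q_i},w)$ in a way that Lemma \ref{1.2.6} can exploit, and establishing $\delta(f)<\delta(Q_\lambda)$ for all $f$ of smaller degree is exactly where \cite{NS} brings in pseudo-convergent sequences (equivalently, the analysis of MLV limit key polynomials in \cite{Ma}). Without this step your recursion can stall at a limit ordinal, and the construction only delivers the finite complete sets of Corollary \ref{1.1.21}, i.e.\ it proves the theorem only for inductive valuations. So the proposal is a correct skeleton with the load-bearing lemma left unproved.
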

 
	\begin{remark} \label{1.1.3}
		As shown in  \cite[Remark 4.6]{MMS} and  \cite[proof of Theorem 1.1]{NS}, there is a complete set  $\Lambda=\{Q_i\}_{i\in\Delta}$ of ABKPs  for  $w$ having the following properties.
		\begin{enumerate}[(i)]
		\item $\Delta=\bigcup_{j\in I}\Delta_j$ with $I=\{0,1,\ldots, N\}$ or $\mathbb{N}\cup\{0\},$ and for each $j\in I$ we have $\Delta_j=\{j\}\cup\vartheta_{j},$ where $\vartheta_j$ is an ordered set without last element or is empty.
		\item $Q_0$ is a monic polynomial of degree one.
		\item For all $j\in I\setminus \{0\}$ we have $j-1<i<j,$ for all $i \in\vartheta_{j-1}.$
		\item All  polynomials $Q_i$ with $i\in\Delta_j$ have the same degree and  have degree strictly  less than  the degree of the polynomials $Q_{i'}$ for every $i'\in\Delta_{j+1}.$
		\item For each $i<i'\in\Delta$ we have $w(Q_i)<w(Q_{i'})$ and $\delta(Q_i)<\delta(Q_{i'}).$
		\end{enumerate}
	\end{remark}
{\bf All complete set of ABKPs used in this paper will be assumed to satisfy these properties.}

	Even though the set $\{Q_i\}_{i\in\Delta}$  of ABKPs  for $w$  is not unique, the cardinality of $I$ and the degree of an abstract key polynomial $Q_i$ for each $i\in I$ are uniquely determined by $w.$
	
	The ordered set $\Delta$ has a last element if and only if  the set $I=\{0,1,\ldots,N\}$ is finite and  
		$\Delta_N=\{N\},$ i.e., $\vartheta_N=\emptyset.$

	\begin{definition}[\bf Limit ABKPs]\label{1.1.22}
	Let $\Lambda=\{Q_i\}_{i\in\Delta}$ be a complete set of ABKPs for $w.$	For	an element $i\in I\setminus\{0\},$ we say that  $Q_i$ is a {\bf limit ABKP} for $w$ if $i$ has no immediate predecessor in $\Delta;$ that is,
			 $\vartheta_{i-1}\neq \emptyset.$
	\end{definition}

\medskip
We now recall  the  notion of key polynomials which was first introduced by MacLane  in 1936 and   generalized by Vaqui\'e in 2007 (see \cite{M} and \cite{V}).  
\begin{definition}[\bf Key polynomials]
	For a valuation $w$ on $K(X)$ and polynomials $f,$ $g$ in $K[X],$ we say that
	\begin{enumerate}[(i)]
		\item  $f$ and $g$ are $w$-equivalent and write $f\thicksim_{w} g$ if $w(f-g)>w(f)=w(g).$
		\item $g$ is $w$-divisible by $f$  (denoted by $f\mid_{w}g$) if there exists some polynomial $h \in K[X]$ such that $g\thicksim_{w} fh.$
		\item  $f$ is $w$-irreducible if for any $h,\, q\in K[X],$ whenever $f\mid_{w} hq,$ then either $f\mid_{w}h $ or $f\mid_{w}q.$
		\item $f$ is $w$-minimal if for every nonzero polynomial $h\in K[X],$ whenever $f\mid_{w}h,$ then $\deg h\geq \deg f.$
	\end{enumerate}
Any monic polynomial $f$  satisfying (iii) and (iv) is called a (MacLane-Vaqui\'e)  {\bf key polynomial} for $w.$
	\end{definition}
In view of  Proposition 2.10 of  \cite{Ma},  any ABKP, $Q$ for $w$ is a key polynomial for $w_Q$ of minimal degree.
Let $\operatorname{KP}(w)$ denote the set of all key polynomials for $w.$ For any $\phi\in \operatorname{KP}(w),$ we denote by $[\phi]_w$ the set of all key polynomials which are $w$-equivalent to $\phi.$

The existence of key polynomials is characterized as follows.
\begin{theorem}[Theorem 4.4, \cite{EN}]\label{1.1.1}
	A valuation $w$ on $K(X)$ has $\operatorname{KP}(w)\neq\emptyset$ if and only if  it is valuation-transcendental.
\end{theorem}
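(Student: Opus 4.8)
The plan is to transfer the whole statement to the algebraically closed base field $\overline{K},$ where the structure is explicit, and to use the dictionary between key polynomials for $w$ and pairs of definition for $\overline{w}.$ The essential input is \cite[Lemma 3.3]{FV-K}, which guarantees that $w$ is valuation-transcendental if and only if $\overline{w}$ is, together with the classical structural fact that a valuation-transcendental extension of $\bar{v}$ to $\overline{K}(X)$ is exactly one of the form $\overline{w}_{\alpha,\delta},$ i.e.\ admits a pair of definition $(\alpha,\delta).$ The first observation I would record is that every valuation $\overline{w}_{\alpha,\delta}$ is automatically valuation-transcendental: if $\delta\in\Gamma_{\bar{v}}$ then $\Gamma_{\overline{w}}=\Gamma_{\bar{v}}$ and, choosing $c\in\overline{K}$ with $\bar{v}(c)=\delta,$ the residue of $(X-\alpha)/c$ is transcendental over $k_{\bar{v}},$ so $\overline{w}_{\alpha,\delta}$ is residually transcendental; if $\delta\notin\Gamma_{\bar{v}}$ then, since $\Gamma_{\bar{v}}$ is divisible, $\Gamma_{\overline{w}}/\Gamma_{\bar{v}}\cong\mathbb{Z}$ is torsion-free while $k_{\overline{w}}=k_{\bar{v}},$ so $\overline{w}_{\alpha,\delta}$ is value-transcendental. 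This dichotomy is the conceptual heart of the statement.

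For the implication that valuation-transcendence forces $\operatorname{KP}(w)\neq\emptyset,$ I would argue as follows. If $w$ is valuation-transcendental then so is $\overline{w}$ by \cite[Lemma 3.3]{FV-K}, hence $\overline{w}=\overline{w}_{\alpha,\delta}$ for a pair of definition $(\alpha,\delta)$ which we may take to be \emph{minimal}, meaning $\alpha$ has least degree over $K$ among all $\beta\in\overline{K}$ with $\overline{w}(X-\beta)\geq\delta.$ Set $\phi=\operatorname{Irr}(\alpha,K).$ I would then verify that $\phi\in\operatorname{KP}(w)$: the minimality of the pair translates directly into $\phi$ being $w$-minimal, and the fact that $\operatorname{in}_w\phi$ is prime in the graded algebra of $w$ (again a consequence of $(\alpha,\delta)$ being a pair of definition) gives $w$-irreducibility. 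Thus $\phi$ is a key polynomial for $w.$

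For the converse, suppose $\phi\in\operatorname{KP}(w)$ and let $\alpha$ be a root of $\phi$ realizing $\delta(\phi)=\max\{\overline{w}(X-\beta)\mid\phi(\beta)=0\},$ so that $\overline{w}(X-\alpha)=\delta:=\delta(\phi).$ The non-archimedean inequality always gives $\overline{w}_{\alpha,\delta}\leq\overline{w}$ upon expanding any polynomial in powers of $X-\alpha,$ so it suffices to prove equality, i.e.\ that $(\alpha,\delta)$ is a pair of definition for $\overline{w}.$ Here I would use that the $w$-minimality of $\phi$ forces $\alpha$ to have minimal degree among elements $\beta$ with $\overline{w}(X-\beta)=\delta,$ which is precisely the minimality condition ensuring $\overline{w}=\overline{w}_{\alpha,\delta}.$ Once equality holds, the dichotomy of the first paragraph shows $\overline{w}$ is valuation-transcendental, and \cite[Lemma 3.3]{FV-K} transfers this back to $w.$

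The main obstacle is the precise correspondence between key polynomials and minimal pairs of definition, needed in both directions. Concretely, one must show (i) that $\operatorname{Irr}(\alpha,K)$ attached to a minimal pair is $w$-minimal and $w$-irreducible, and (ii) that the $\delta$-maximizing root of a key polynomial produces an \emph{equality} $\overline{w}=\overline{w}_{\alpha,\delta}$ rather than a mere inequality. Both hinge on controlling $w(f)$ through the $(X-\alpha)$-expansion and on comparing degrees, the subtle point being that only the root attaining the maximal value $\delta(\phi)$ yields a valid pair of definition; the careful degree and minimality bookkeeping, carried out via the invariant $\delta$ and the truncations $w_Q,$ is what makes this rigorous.
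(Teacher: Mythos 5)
First, a caveat: the paper itself gives no proof of this statement --- it is imported verbatim as \cite[Theorem 4.4]{EN} --- so there is no internal argument to compare against. Your route, transferring everything to $\overline{K}(X)$ via \cite[Lemma 3.3]{FV-K} and matching key polynomials with (minimal) pairs of definition, is a legitimate and well-known alternative to Nart's intrinsic graded-algebra argument; it is essentially the strategy of Novacoski's paper \cite{JN} on key polynomials and minimal pairs, which this paper also cites. Your opening dichotomy --- that $\overline{w}_{\alpha,\delta}$ is residually transcendental when $\delta\in\Gamma_{\bar v}$ and value-transcendental otherwise, using divisibility of $\Gamma_{\bar v}$ --- is correct.

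There is, however, a genuine gap. Both implications rest entirely on the correspondence between key polynomials and minimal pairs, which you flag as ``the main obstacle'' but never establish; since that correspondence \emph{is} the content of the theorem, the proposal is a plan rather than a proof. Worse, the justification you offer in the converse direction is false as stated: the $w$-minimality of $\phi$ does \emph{not} force the $\delta$-maximizing root $\alpha$ to have minimal degree among all $\beta\in\overline{K}$ with $\overline{w}(X-\beta)=\delta$. Take $K=\mathbb{Q}$ with the $2$-adic valuation and $w$ the Gauss valuation with $w(X)=0$: then $\phi=X^2+X+1$ is a key polynomial for $w$ (its residue is irreducible over $\mathbb{F}_2$), its maximizing root $\omega$ has degree $2$ over $K$, yet $\beta=0$ has degree $1$ and satisfies $\overline{w}(X-\beta)=0=\delta(\phi)$. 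What you actually need --- and what your own reduction of the inequality $\overline{w}_{\alpha,\delta}\leq\overline{w}$ shows is the real issue --- is that no $\beta\in\overline{K}$ satisfies $\overline{w}(X-\beta)>\delta(\phi)$; this requires a separate argument (e.g., via the expansion identity $w(f)=\min_i w(f_i\phi^i)$ coming from $w$-minimality, combined with the description of the truncation attached to $(\alpha,\delta)$), none of which is supplied. Finally, the ``classical structural fact'' that a valuation-transcendental extension of $\bar v$ to $\overline{K}(X)$ admits a pair of definition, i.e.\ that $\beta\mapsto\overline{w}(X-\beta)$ attains its maximum, is itself a nontrivial input that must be proved or precisely cited.
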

\begin{definition}[\bf Ordinary augmentation]\label{1.1.15}
	Let $\phi$ be a key polynomial for a valuation $w'$ on $K(X)$ and let $\gamma> w'(\phi)$ be an element of a totally ordered abelian group $\Gamma$ containing $\Gamma_{w'}$ as an ordered subgroup. The map $w: K[X]\longrightarrow \Gamma\cup\{\infty\}$ defined by 
	$$w(f):=\min\{w'(f_i)+i\gamma\},$$ where
	 $f=\sum_{i\geq 0}f_i \phi^i, $ $\deg f_i<\deg \phi,$ is the $\phi$-expansion of $f\in K[X],$  gives a valuation on $K(X)$ (see \cite[Theorem 4.2]{M}) called the   {\bf ordinary augmentation of $w$}  and is  denoted  by $w=[w'; \phi,\gamma].$
\end{definition}
 Clearly,  $\phi\in\Phi(w',w).$  On the other hand,
the polynomial $\phi$ is a key polynomial of minimal degree for the augmented valuation $w$ (see \cite[Corollary 7.3]{EN}). 

 If $\phi$ is a key polynomial for $w$ of minimal degree, then we define
 $$\deg(w):=\deg \phi.$$ 
 
 \begin{theorem}[Theorem 1.15, \cite{V}]\label{1.1.19}
 	Let $w$ be a valuation on $K(X)$ and $w'<w.$ Then 
 	any $\phi\in \Phi(w',w)$ is a key polynomial for $w'$ and  $$w'<[w';\phi,w(\phi)]\leq w.$$  For any non-zero polynomial $f\in K[X],$ the equality 
 	$w'(f)=w(f)$ holds if and only if $\phi\nmid_{w'} f.$
 \end{theorem}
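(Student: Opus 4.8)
The plan is to write $\mu=\deg\phi$ for the common degree of the elements of $\Phi(w',w)$ and to extract from the definition of $\Phi(w',w)$ the one fact that drives everything: \emph{if $\deg f<\mu$ then $w'(f)=w(f)$.} Indeed, since $w'\le w$, any $f$ of degree $<\mu$ with $w'(f)<w(f)$ could be divided by its leading coefficient $c\in K^{\times}$, on which $w'$ and $w$ agree (both restricting to $v$), producing a monic polynomial of degree $<\mu$ on which $w'<w$, contradicting the minimality built into $\Phi(w',w)$. I will use this observation together with $w'(\phi)<w(\phi)$ and $w'\le w$ throughout.

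First I would show $\phi$ is $w'$-minimal. Suppose $\phi\mid_{w'}h$ with $h\neq0$ and $\deg h<\mu$, say $h\sim_{w'}\phi q$ with $q\neq0$. Using $w\ge w'$ and the observation $w(h)=w'(h)$, one gets $w(\phi q)=w(\phi)+w(q)>w'(\phi)+w'(q)=w'(\phi q)=w(h)$ and $w(h-\phi q)\ge w'(h-\phi q)>w'(h)=w(h)$; then $h=\phi q+(h-\phi q)$ forces $w(h)\ge\min\{w(\phi q),\,w(h-\phi q)\}>w(h)$, a contradiction. Hence $\phi$ is $w'$-minimal. Having minimality, I invoke the standard fact that a $w'$-minimal polynomial computes $w'$ through its expansion, i.e. $w'(f)=\min_i w'(f_i\phi^i)$ for the $\phi$-expansion $f=\sum_i f_i\phi^i$ (see \cite{V}, \cite{EN}); I will refer to this as property (M).

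Next I would prove the last assertion, $\phi\mid_{w'}f\iff w'(f)<w(f)$, from which the remaining claims follow. The forward direction is the same three estimates as above: if $f\sim_{w'}\phi q$ with $q\neq0$, then $w(\phi q)>w'(f)$ and $w(f-\phi q)>w'(f)$, so $w(f)>w'(f)$. For the converse, assume $\phi\nmid_{w'}f$ and suppose toward a contradiction that $w'(f_0)>w'(f)$; setting $g=\sum_{i\ge1}f_i\phi^{i-1}$ gives $\phi g=f-f_0$, and (M) yields $w'(f-f_0)=w'(f)<w'(f_0)=w'(f-\phi g)$, so $f\sim_{w'}\phi g$ and $\phi\mid_{w'}f$, a contradiction. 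Thus $w(f_0)=w'(f_0)=w'(f)$; since $w(\phi)>w'(\phi)$, for every $i\ge1$ one has $w(f_i\phi^i)=w(f_i)+iw(\phi)>w(f_i)+iw'(\phi)\ge w'(f)=w(f_0)$, so $i=0$ is the strict minimizer and $w(f)=w(f_0)=w'(f)$. This establishes the equivalence, equivalently $w'(f)=w(f)\iff\phi\nmid_{w'}f$.

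With this equivalence, $w'$-irreducibility is formal: for $f=hq$, additivity gives $w(hq)-w'(hq)=\bigl(w(h)-w'(h)\bigr)+\bigl(w(q)-w'(q)\bigr)$, a sum of two non-negative terms (as $w'\le w$), positive exactly when one summand is; hence $\phi\mid_{w'}hq$ iff $\phi\mid_{w'}h$ or $\phi\mid_{w'}q$. So $\phi$ is $w'$-minimal and $w'$-irreducible, i.e. a key polynomial for $w'$. Finally, for $w'':=[w';\phi,w(\phi)]$ (well defined since $w(\phi)>w'(\phi)$), the observation gives $w''(f)=\min_i\{w'(f_i)+iw(\phi)\}=\min_i w(f_i\phi^i)\le w(f)$, so $w''\le w$, while (M) gives $w''(f)\ge\min_i\{w'(f_i)+iw'(\phi)\}=w'(f)$, so $w'\le w''$, with strictness from $w''(\phi)=w(\phi)>w'(\phi)$. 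I expect the main obstacle to be the minimality step, where the minimal-degree hypothesis is genuinely used, and the converse of the divisibility equivalence, where (M) is essential; once minimality and (M) are in hand, irreducibility and the chain $w'<w''\le w$ are consequences of additivity and the ultrametric inequality.
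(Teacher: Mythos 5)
The paper does not prove this statement; it imports it verbatim as Theorem 1.15 of \cite{V} (see also Section 2 of \cite{EN}), so there is no in-paper argument to compare against. Your blind proof is a correct, essentially self-contained derivation along the standard lines. The three pillars are sound: (a) the reduction observation that $w'$ and $w$ agree on all polynomials of degree $<\deg\phi$ (your leading-coefficient normalization is fine, since $w'\le w$ forces $w'=w$ on $K^{\times}$); (b) the $w'$-minimality argument via the ultrametric estimate on $h=\phi q+(h-\phi q)$; and (c) the equivalence $\phi\mid_{w'}f\iff w'(f)<w(f)$, from which the divisibility criterion, the $w'$-irreducibility (via additivity of $w-w'$ on products), and the chain $w'<[w';\phi,w(\phi)]\le w$ all follow cleanly. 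The one external ingredient you lean on is property (M), the equivalence of $w'$-minimality with $w'(f)=\min_i w'(f_i\phi^i)$; this is a genuine (if standard) lemma — Proposition 2.3 of \cite{EN} — and citing it rather than reproving it is reasonable, but you should be aware that it carries real content: your converse step ($\phi\nmid_{w'}f\Rightarrow w'(f_0)=w'(f)$) and the inequality $w'\le[w';\phi,w(\phi)]$ both collapse without it. With that citation made explicit, the argument is complete.
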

 \begin{corollary}[Corollary 2.5, \cite{EN1}]\label{1.1.16}
 	Let $w'<w$ be as above. Then
 	\begin{enumerate}[(i)]
 		\item  $\Phi(w',w )=[\phi]_{w'}$ for all $\phi\in\Phi(w',w ).$
 		\item If $w'<\mu\leq w$ is a chain  of valuations, then $ \Phi(w',w )=\Phi(w',\mu).$
 		In particular,  for all $f\in K[X]$ we have 
 		\begin{align*}
 			w'(f)=w(f) \iff w'(f)=\mu(f). 
 		\end{align*}
 	\end{enumerate}
 \end{corollary}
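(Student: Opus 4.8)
I will treat the two parts separately, with Theorem \ref{1.1.19} as the principal tool. Throughout I set $d=\deg\Phi(w',w)$ and use the basic observation that, by minimality of $d$, every $f\in K[X]$ with $\deg f<d$ satisfies $w'(f)=w(f)$: for monic $f$ this is immediate, and the general case follows by scaling since $w'$ and $w$ agree on $K$.

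For part (i), fix $\phi\in\Phi(w',w)$; by Theorem \ref{1.1.19} it is a key polynomial for $w'$, as is every member of $\Phi(w',w)$. To prove $\Phi(w',w)\subseteq[\phi]_{w'}$, I take an arbitrary $\psi\in\Phi(w',w)$ and set $g=\psi-\phi$, of degree $<d$, so $w'(g)=w(g)$. I then run a trichotomy on $w'(g)$ versus $w'(\phi)$: the cases $w'(g)<w'(\phi)$ and $w'(g)=w'(\phi)$ each force $w'(\psi)=w(\psi)$, contradicting $\psi\in\Phi(w',w)$, so only $w'(g)>w'(\phi)$ survives, and this says exactly that $\psi\sim_{w'}\phi$; since $\psi$ is a key polynomial for $w'$, we get $\psi\in[\phi]_{w'}$. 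For the reverse inclusion, given $\psi\in[\phi]_{w'}$ we have $\deg\psi=\deg\phi=d$ and $w'(\psi-\phi)>w'(\phi)=w'(\psi)$; because $\deg(\psi-\phi)<d$ this value equals $w(\psi-\phi)$, and combining it with $w(\phi)>w'(\phi)$ gives $w(\psi)\ge\min\{w(\phi),w(\psi-\phi)\}>w'(\phi)=w'(\psi)$. Hence $\psi$ is monic of degree $d$ with $w'(\psi)<w(\psi)$, i.e.\ $\psi\in\Phi(w',w)$.

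For part (ii) the plan is to reduce the entire statement to producing a single polynomial lying in both $\Phi(w',w)$ and $\Phi(w',\mu)$. I may assume $\mu<w$ (otherwise there is nothing to prove). Since $w'<\mu$, Theorem \ref{1.1.19} furnishes $\phi_1\in\Phi(w',\mu)$, a key polynomial for $w'$, with $w'(\phi_1)<\mu(\phi_1)\le w(\phi_1)$; in particular $w'(\phi_1)<w(\phi_1)$, so by the last assertion of Theorem \ref{1.1.19} applied to $w'<w$ we obtain $\phi\mid_{w'}\phi_1$ for any $\phi\in\Phi(w',w)$. The key point is then the claim that for key polynomials $\phi,\phi_1$ of $w'$ with $\phi\mid_{w'}\phi_1$ one has $\deg\phi_1=\deg\phi$. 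Granting it, $\phi_1$ is a monic polynomial of degree $d$ with $w'(\phi_1)<w(\phi_1)$, hence $\phi_1\in\Phi(w',w)$; thus $\phi_1\in\Phi(w',w)\cap\Phi(w',\mu)$, and part (i) applied to each of the chains $w'<w$ and $w'<\mu$ yields $\Phi(w',w)=[\phi_1]_{w'}=\Phi(w',\mu)$. The displayed equivalence then drops out of Theorem \ref{1.1.19}: with the \emph{same} distinguished key polynomial $\phi_1$ for both pairs, $w'(f)=w(f)\Leftrightarrow\phi_1\nmid_{w'}f\Leftrightarrow w'(f)=\mu(f)$.

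The main obstacle is this degree-matching claim, which I would prove from the definitions alone. Writing $\phi_1\sim_{w'}\phi h$, the $w'$-irreducibility of $\phi_1$ applied to $\phi_1\mid_{w'}\phi h$ gives $\phi_1\mid_{w'}\phi$ or $\phi_1\mid_{w'}h$. In the first case $w'$-minimality of $\phi_1$ and of $\phi$ force $\deg\phi\ge\deg\phi_1$ and $\deg\phi_1\ge\deg\phi$, so the degrees agree. The second case I expect to be the delicate one: from $h\sim_{w'}\phi_1 h'$ one gets $\phi_1\sim_{w'}\phi\phi_1 h'$, whence $w'(1-\phi h')>0$ and therefore $\phi\mid_{w'}1$, contradicting $\deg\phi\ge1$; this eliminates the second case. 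Here I would pause to verify that $\sim_{w'}$ is multiplicative and transitive (both immediate from the defining valuation inequalities) so that these manipulations are legitimate, after which the claim, and with it part (ii), is complete.
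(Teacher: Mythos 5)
Your argument is correct, but there is nothing in the paper to compare it against: this statement is imported verbatim as Corollary 2.5 of \cite{EN1} and the paper supplies no proof, so yours is a genuinely independent, self-contained derivation from Theorem \ref{1.1.19} and the bare definitions. For (i), the trichotomy on $w'(\psi-\phi)$ versus $w'(\phi)$ works: in the cases $w'(\psi-\phi)\leq w'(\phi)$ one indeed gets $w(\psi)=w'(\psi)$ (using $w(\psi-\phi)=w'(\psi-\phi)<w(\phi)$ to pin down $w(\psi)$), and the surviving case is exactly $\psi\sim_{w'}\phi$; the reverse inclusion is also fine, though you assert $\deg\psi=\deg\phi$ for $\psi\in[\phi]_{w'}$ without justification --- it follows from mutual $w'$-divisibility plus $w'$-minimality of both, i.e.\ from the same degree-matching lemma you prove later, so you should say so (and you should dispose of the trivial case $\psi=\phi$ before running the trichotomy). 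For (ii), the reduction to exhibiting one polynomial in $\Phi(w',w)\cap\Phi(w',\mu)$ is clean, and your elimination of the case $\phi_1\mid_{w'}h$ is sound: from $\phi_1\sim_{w'}\phi\phi_1h'$ one gets $w'(1-\phi h')>0$ and $w'(\phi h')=0$, hence $\phi\mid_{w'}1$, contradicting $w'$-minimality since $\deg\phi\geq 1$ (which holds because elements of $\Phi(w',w)$ cannot be constants). Where Nart obtains the fact that two key polynomials which $w'$-divide one another are $w'$-equivalent (hence of equal degree) from the graded-algebra machinery --- residual images being homogeneous prime elements --- you extract it directly from the definitions of $w'$-irreducibility and $w'$-minimality; your route is more elementary and requires no auxiliary structure, at the cost of the slightly delicate unit-cancellation step, which you handle correctly.
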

 
\bigskip

Let $(K,v)$ be a valued field and $w'$ be an extension of $v$ to $K(X).$ We now recall the definition of a  continuous family of augmentations of $w'$  (\cite{V, EN}).
\begin{definition}\label{1.1.14}
	Let $w'$ be a valuation-transcendental valuation on $K(X).$ A  {\bf continuous family of augmentations} of $w'$ is a family of ordinary augmentations of $w'$  $$\mathcal{W} =(\rho_i=[w';\chi_i,\gamma_i])_{i\in	\mathbf{A}},$$ indexed by a totally ordered set $\mathbf{A},$ satisfying the following conditions:
	\begin{enumerate}[(i)]
		\item The set $\mathbf{A}$  has no last element.
		\item All  key polynomials $\chi_i\in \operatorname{KP}(w')$ have the same degree.
		\item For all $i<j$ in $\mathbf{A},$ $\chi_j$ is a  key polynomial for $\rho_i$ and satisfies:
		$$\chi_j\not\thicksim_{\rho_i}\chi_i~\text{and}~ \rho_j=[\rho_i;\chi_j,\gamma_j].$$
	\end{enumerate}
\end{definition}
	The common degree $m=\deg\chi_i,$ for all $i,$ is called the {\bf stable degree} of the family $\mathcal{W}$ and is denoted by $\deg (\mathcal{W}).$
	
A polynomial $f\in K[X]$ is said to be $\mathcal{W}$-{\bf stable} with respect to the family $\mathcal{W}=(\rho_i)_{i\in\mathbf{A}}$  if 
$$\rho_i(f)=\rho_{i_0}(f),~ \text{for every}~ i\geq i_0$$ for some index $i_0\in\mathbf{A}.$ This stable value is denoted by $\rho_\mathcal{W}(f).$ By Corollary \ref{1.1.16} (ii), a polynomial $f\in K[X]$  is {\bf $\mathcal{W}$-unstable} if and only if  
$$\rho_i(f)<\rho_j(f),\hspace{5pt} \forall i<j\in\mathbf{A}.$$  The minimal degree of an $\mathcal{W}$-unstable polynomial is denoted by $m_{\infty}.$ If  all polynomials are $\mathcal{W}$-stable, then we set $m_{\infty}=\infty$. 
\begin{remark}\label{1.1.18}
The following properties hold for any  continuous family $\mathcal{W} =(\rho_i)_{i\in\mathbf{A}}$ of augmentations (see  \cite[p.\ 9]{EN1}):
\begin{enumerate}[(i)]

	\item For all $i\in\mathbf{A},$ $\chi_i$ is a key polynomial for $\rho_i$ of minimal degree.
		\item $\Phi(\rho_i,\rho_j)=[\chi_j]_{\rho_i}$ $\forall$ $i<j\in\mathbf{A}.$
	\item All the value groups $\Gamma_{\rho_i}$ coincide and   the common value group is denoted by $\Gamma_{\mathcal{W}}.$ 
\end{enumerate}
\end{remark}
\begin{definition}[\bf MacLane-Vaqui\'e limit key polynomials]
	Let $\mathcal{W}$ be a continuous family of augmentations of a valuation $w'.$ 
 A   monic $\mathcal{W}$-unstable polynomial of minimal degree is called a {\bf MacLane-Vaqui\'e  limit key polynomial} (abbreviated as MLV) for $\mathcal{W}.$
\end{definition}
  We denote by $\operatorname{KP}_{\infty}(\mathcal{W})$  the set of all MLV limit key polynomials. Since the product of stable polynomials is stable, so all MLV limit key polynomials are irreducible in $K[X].$ 
 \begin{definition}
 	We say that $\mathcal{W}$ is an {\bf essential continuous family} of augmentations if $\deg (\mathcal{W})=m<m_{\infty}<\infty.$
 \end{definition}

Let $\mathcal{W}$ be an essential continuous family of augmentations of a valuation $w'.$ Then 	$\mathcal{W}$ admit MLV limit key polynomials.
If $Q$ is any MLV limit key polynomial, then any polynomial $f$ in $K[X]$ with $\deg f<\deg Q$ is  $\mathcal{W}$-stable.
\begin{definition}[\bf Limit augmentation]
	Let   $Q$ be any MLV limit key polynomial for an essential continuous family of augmentations $\mathcal{W} =(\rho_i)_{i\in\mathbf{A}}$ of $w'$  and   $\gamma>\rho_i(Q),$ for all $i\in\mathbf{A},$ be an element of  a totally ordered abelian group $\Gamma$  containing $\Gamma_{\mathcal{W}}$ as an  ordered subgroup. Then, the map $w: K[X]\longrightarrow\Gamma\cup\{\infty\}$ defined by
	$$w(f)=\min_{i\geq 0}\{\rho_\mathcal{W}(f_i)+i\gamma\},$$
where  $f=\sum_{i\geq 0} f_iQ^i,$ $\deg f_i<\deg Q,$ is the $Q$-expansion of $f\in K[X],$ gives a valuation on $K(X),$  called  the {\bf limit augmentation} of $\mathcal{W}$ and we denote it by $w=[\mathcal{W}; Q, \gamma].$ 

\end{definition}
 Note that $w(Q)=\gamma$ and $\rho_i<w$ for all $i\in\mathbf{A}.$ On the other hand,  $Q$ is a key polynomial for $w$ of minimal degree \cite[Corollary 7.13]{EN}.
\vspace{.20pt}

We now recall the definition of MacLane-Vaqui\'e chains given by  Nart in \cite{EN1}. For this,  we first
consider a finite, or countably infinite, chain of mixed augmentations
\begin{align}\label{1.12}
	w_0\xrightarrow{\phi_1,\gamma_1} w_1\xrightarrow{\phi_2,\gamma_2}\cdots \longrightarrow w_{n}\xrightarrow{\phi_{n+1},\gamma_{n+1}} w_{n+1}\longrightarrow\cdots
\end{align}
in which every valuation is an augmentation of the previous one and is of one of the following type:
\begin{itemize}
	\item  Ordinary augmentation: $w_{n+1}=[w_n; \phi_{n+1},\gamma_{n+1}],$ for some $\phi_{n+1}\in \operatorname{KP}(w_n).$ 
	\item Limit augmentation:  $w_{n+1}=[\mathcal{W}_n; \phi_{n+1},\gamma_{n+1}],$ for some $\phi_{n+1}\in \operatorname{KP}_{\infty}(\mathcal{W}_n),$ where $\mathcal{W}_n=(\rho_i)_{i\in\mathbf{A}_n}$ is an essential continuous family of augmentations of $w_n.$ 
\end{itemize}
Let $\phi_0\in \operatorname{KP}(w_0)$ be a key polynomial of minimal degree and let $\gamma_0=w_0(\phi_0).$
 Then, in view of  Theorem \ref{1.1.19},  Proposition 6.3 of \cite{EN}, Proposition 2.1, 3.5 of \cite{EN1} and Corollary \ref{1.1.16},  we have the   following properties of a  chain (\ref{1.12}) of augmentations.
 \begin{remark}\label{1.1.20}
\begin{enumerate}[(i)]
	\item  $\gamma_n=w_n(\phi_n)<\gamma_{n+1}.$
	\item For all $n\geq 0,$ the polynomial $\phi_n$ is a key polynomial for $w_n$ of minimal degree and therefore
	$$\deg(w_n)(=\deg \phi_n)~\text{divides}~\deg (\Phi(w_n,w_{n+1})).$$
	\item \begin{equation*}
		\Phi(w_n,w_{n+1})=
		\begin{cases}
			[\phi_{n+1}]_{w_n},~ \text{if}~ w_n\rightarrow w_{n+1}~ \text{is an  ordinary augmentation}\\
		\displaystyle\Phi(w_n,\rho_i), ~\text{for all $i\in\mathbf{A}_n,$}~\text{ if}~ w_n\rightarrow w_{n+1} ~\text{is a limit augmentation}.
		\end{cases}
	\end{equation*}
	\item \begin{equation*}
			\deg(\Phi(w_n,w_{n+1}))=
			\begin{cases}
				\deg\phi_{n+1},~\text{if}~ w_n\rightarrow w_{n+1}~ \text{is an ordinary augmentation}\\
				\displaystyle\deg(\mathcal{W}_n),~\text{ if}~ w_n\rightarrow w_{n+1} ~\text{is a limit augmentation}.
			\end{cases}
	\end{equation*}
\end{enumerate}
\end{remark}

\begin{definition}[\bf MacLane-Vaqui\'e chains]\label{1.1.13}
	A finite, or countably infinite chain of mixed augmentations as in (\ref{1.12}) is called a {\bf MacLane-Vaqui\'e  chain} (abbreviated as MLV), if every augmentation step satisfies:
	\begin{enumerate}[(i)]
		\item if $ w_n\rightarrow w_{n+1}$ is an ordinary augmentation, then $\deg (w_n)<\deg(\Phi(w_n,w_{n+1})).$
		\item if $ w_n\rightarrow w_{n+1}$ is a limit augmentation, then $\deg (w_n)=\deg(\Phi(w_n,w_{n+1}))$ and $\phi_n\notin\Phi(w_n,w_{n+1}).$
	\end{enumerate}
	An MLV  chain is said to be {\bf complete} if  $w_0$ is a   depth zero valuation; that is, $w_0=\overline{w}_{a,\delta}$ for some $a\in K$ and $\delta\in\Gamma$ (see Definition \ref{1.1.2}).
\end{definition}
%%%%%%%%%%%%%%%%%%%%%%%%%%%%%%%%%%%%
\section{Statements of Main Results}
In the following result, we use a  complete set of ABKPs for a valuation $w$ to    construct  a complete finite MLV chain whose last valuation is $w.$
\begin{theorem}\label{1.1.11}
	Let $(K,v)$ be a valued field and let $w$ be an extension of $v$ to $K(X).$ If $\{Q_i\}_{i\in\Delta}$ is a complete set of ABKPs for $w$  such that $N$ is the last element of $\Delta,$ then 
		\begin{align}\label{1.13}
		w_0\xrightarrow{Q_1,\gamma_1} w_1\xrightarrow{Q_2,\gamma_2}\cdots \longrightarrow w_{N-1}\xrightarrow{Q_N,\gamma_N} w_N=w,
	\end{align}
	 is a complete finite MLV chain of $w$ such that 
	\begin{enumerate}[(i)]
	 	\item  if $\vartheta_{j}=\emptyset,$ then $w_j\longrightarrow w_{j+1}$ is an ordinary augmentation. 
	 	\item if $\vartheta_{j}\neq \emptyset,$ then $w_j\longrightarrow w_{j+1}$ is a limit augmentation. 
	\end{enumerate}
In both cases, $w_{j+1}=w_{Q_{j+1}}$ and $\gamma_{j+1}=w(Q_{j+1}).$
\end{theorem}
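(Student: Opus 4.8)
The plan is to realize the intermediate valuations of the chain as truncations: set $w_j:=w_{Q_j}$ for $0\le j\le N$, each of which is a valuation because $Q_j$ is an ABKP for $w$. Two basic facts about the truncations attached to a complete set carry most of the weight, and I would establish (or recall) them first: the truncations are bounded by $w$, i.e.\ $w_{Q_i}\le w$, and they are monotone, i.e.\ $\delta(Q_i)\le\delta(Q_{i'})$ implies $w_{Q_i}\le w_{Q_{i'}}$. Combined with completeness, these pin down the endpoints. Since $\deg Q_0=1$ (Remark \ref{1.1.3}(ii)), the $Q_0$-expansion has scalar coefficients and $w_0=w_{Q_0}=\overline{w}_{a,\delta}$ restricted to $K[X]$ for $a\in K$, $\delta=w(X-a)$, so $w_0$ is depth-zero and the chain will be complete. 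At the top, $Q_N$ has maximal $\delta$ among the $Q_i$, so $w_{Q_i}\le w_{Q_N}\le w$ for all $i$; feeding this into completeness (iii) forces $w_N=w_{Q_N}=w$. The workhorse comparison I would prove next is: for $g$ with $\deg g<\deg Q_{j+1}$ in the case $\vartheta_j=\emptyset$, completeness yields $Q_i$ with $\deg Q_i\le\deg g$ and $w_{Q_i}(g)=w(g)$; since then $\deg Q_i\le\deg Q_j$ forces index $\le j$ and hence $\delta(Q_i)\le\delta(Q_j)$ by the ordering (Remark \ref{1.1.3}(v)), monotonicity gives $w_{Q_i}\le w_j$ and therefore $w_j(g)=w(g)$.

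For the ordinary case $\vartheta_j=\emptyset$, the comparison lemma gives $w_j(g)=w(g)=w_{j+1}(g)$ for every $g$ with $\deg g<\deg Q_{j+1}$ (the last equality because the $Q_{j+1}$-expansion of such $g$ is $g$ itself). I would then show $w_j(Q_{j+1})<w(Q_{j+1})=w_{j+1}(Q_{j+1})$, so that $Q_{j+1}\in\Phi(w_j,w_{j+1})=\Phi(w_j,w)$ (Corollary \ref{1.1.16}(ii)) and is of minimal degree there. By Theorem \ref{1.1.19}, $Q_{j+1}$ is a key polynomial for $w_j$ and $w_j<[w_j;Q_{j+1},w(Q_{j+1})]\le w$. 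Comparing the defining min-formula of the ordinary augmentation with that of the $Q_{j+1}$-truncation, the two agree exactly because $w_j$ and $w$ coincide on all coefficients of degree $<\deg Q_{j+1}$; this yields $w_{j+1}=w_{Q_{j+1}}=[w_j;Q_{j+1},\gamma_{j+1}]$ with $\gamma_{j+1}=w(Q_{j+1})$. The MLV inequality $\deg(w_j)=\deg Q_j<\deg Q_{j+1}=\deg\Phi(w_j,w_{j+1})$ is then the degree jump of Remark \ref{1.1.3}(iv).

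For the limit case $\vartheta_j\neq\emptyset$, all $Q_i$ with $i\in\Delta_j$ share the degree $m=\deg Q_j=\deg(w_j)$ and $Q_{j+1}$ is a limit ABKP (Definition \ref{1.1.22}) of strictly larger degree. Setting $\rho_i:=w_{Q_i}$ for $i\in\vartheta_j$, I would verify that $\mathcal{W}_j=(\rho_i)_{i\in\vartheta_j}$ is an essential continuous family of augmentations of $w_j$ in the sense of Definition \ref{1.1.14}: (a) each $\rho_i=[w_j;Q_i,w(Q_i)]$ is an ordinary augmentation by a degree-$m$ key polynomial $Q_i$, by the same truncation-matching argument as above but now between equal-degree ABKPs with $\delta(Q_j)<\delta(Q_i)$; (b) for $i<i'$ in $\vartheta_j$ one has $\rho_{i'}=[\rho_i;Q_{i'},w(Q_{i'})]$ with $Q_{i'}\not\thicksim_{\rho_i}Q_i$, giving the required transitivity; (c) every $g$ with $\deg g<\deg Q_{j+1}$ is $\mathcal{W}_j$-stable with stable value $w(g)$, since completeness gives $Q_l$ achieving $w(g)$ and cofinality of $\vartheta_j$ yields $i\in\vartheta_j$ with $\delta(Q_i)\ge\delta(Q_l)$, whereas $Q_{j+1}$ is $\mathcal{W}_j$-unstable because the values $\rho_i(Q_{j+1})$ strictly increase. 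Hence $m_\infty=\deg Q_{j+1}$ is finite and exceeds $m$, the family is essential, and $Q_{j+1}\in\operatorname{KP}_{\infty}(\mathcal{W}_j)$. The limit-augmentation formula for $[\mathcal{W}_j;Q_{j+1},w(Q_{j+1})]$ then reduces, on coefficients of degree $<\deg Q_{j+1}$ where the stable value equals $w$, to the $Q_{j+1}$-truncation, giving $w_{j+1}=w_{Q_{j+1}}=[\mathcal{W}_j;Q_{j+1},\gamma_{j+1}]$ with $\gamma_{j+1}=w(Q_{j+1})$. The MLV conditions follow: $\deg(w_j)=m=\deg\Phi(w_j,w_{j+1})$, and $Q_j\notin\Phi(w_j,w_{j+1})$ because $w_j(Q_j)=w(Q_j)$ while every element of $\Phi(w_j,w_{j+1})$ strictly raises the value.

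I expect the main obstacle to lie in the limit case, specifically in showing that the truncations $(w_{Q_i})_{i\in\vartheta_j}$ genuinely form a continuous family — the transitivity $\rho_{i'}=[\rho_i;Q_{i'},\gamma_{i'}]$ together with the non-equivalence $Q_{i'}\not\thicksim_{\rho_i}Q_i$ of step (b) — and in precisely identifying $m_\infty=\deg Q_{j+1}$ so that essentialness and $Q_{j+1}\in\operatorname{KP}_{\infty}(\mathcal{W}_j)$ hold. Everything else follows once the monotonicity of truncations and the strict inequalities $w_{Q_j}(Q_{j+1})<w(Q_{j+1})$ and $\rho_i(Q_{j+1})<\rho_{i'}(Q_{j+1})$ are secured; with those in hand, both the ordinary and the limit identifications amount to matching the defining min-formulas of the augmentations against the corresponding $Q$-truncation.
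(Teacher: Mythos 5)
Your proposal is correct and follows essentially the same route as the paper: realize each $w_j$ as the truncation $w_{Q_j}$, use monotonicity of truncations (Proposition \ref{2.1.6}) together with completeness to show $Q_{j+1}\in\Phi(w_j,w)$ in the ordinary case and to build the essential continuous family $(w_{Q_i})_{i\in\vartheta_j}$ in the limit case, then identify the augmentation with the corresponding truncation. The only cosmetic difference is that where you match the min-formulas of augmentation and truncation by hand, the paper invokes Proposition \ref{2.1.6}(iv) and Theorem \ref{2.1.2} to get the identifications $w_{Q_{j+1}}=[w_j;Q_{j+1},w(Q_{j+1})]$ and $[\mathcal{W}_j;Q_{j+1},\gamma_{j+1}]=w_{Q_{j+1}}$ directly.
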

The converse of the above result also holds.
\begin{theorem}\label{1.1.12}
	Let $(K,v)$ be a valued field and let $w$ be an extension of $v$ to $K(X).$ If 
	\begin{align}\label{1.14}
		w_0\xrightarrow{\phi_1,\gamma_1} w_1\xrightarrow{\phi_2,\gamma_2}\cdots \longrightarrow w_{N-1}\xrightarrow{\phi_N,\gamma_N} w_N=w,
	\end{align}
is a complete finite MLV chain of $w,$ 
	 then $\Lambda=\{Q_i\}_{i\in\Delta}$ forms a complete set of ABKPs for $w,$ where
	 \begin{enumerate}[(i)]
	 	\item $\Delta=\bigcup_{j\in I}\Delta_j,$ with $I=\{0,1,\ldots, N\},$ and  $\Delta_j=\{j\}\cup\mathbf{A}_j$ for all $j\in I.$ Moreover,  $Q_j=\phi_j$ for all $j\in I.$
	 	\item if $w_j\longrightarrow w_{j+1}$ is an ordinary augmentation, then  $\vartheta_{j}=\emptyset.$
	 	\item if $w_j\longrightarrow w_{j+1}$ is a limit augmentation, with respect to an essential continuous family $\mathcal{W}_j=(\rho_i)_{i\in\mathbf{A}_j}$ of augmentations of $w_j,$ then $\vartheta_{j}=\mathbf{A}_j.$ Moreover, $Q_i=\chi_i$ for all $i\in\mathbf{A}_j,$ where $\chi_i$ is the key polynomial for $w_j$ such that $\rho_i=[w_j;\chi_i,w(\chi_i)].$
	 \end{enumerate} 
\end{theorem}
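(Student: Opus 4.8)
The plan is to construct $\{Q_i\}_{i\in\Delta}$ exactly as prescribed and to verify the three conditions defining a complete set of ABKPs, the whole argument resting on the identification of every valuation of the chain with a truncation of $w=w_N$. So I would first establish the truncation identities
\begin{align*}
 w_j=w_{\phi_j}\ (j\in I),\qquad \rho_i=w_{\chi_i}\ (i\in\mathbf{A}_j),
\end{align*}
where both right-hand sides are truncations of $w$. Each reduces to the same computation: since $\phi_j$ is a key polynomial of minimal degree for $w_j$ (Remark \ref{1.1.20}), the valuation $w_j$ is computed from the $\phi_j$-expansion by $w_j(f)=\min_k\{w_j(f_k)+k\,w_j(\phi_j)\}$; the inequality $\deg f_k<\deg\phi_j=\deg(w_j)\le\deg\bigl(\Phi(w_j,w)\bigr)$ (for $j<N$; the case $j=N$ is immediate as $w=w_N$) forces $w_j(f_k)=w(f_k)$, while Definition \ref{1.1.13} together with Theorem \ref{1.1.19} gives $w_j(\phi_j)=w(\phi_j)$, and comparing the two formulas yields $w_j=w_{\phi_j}$. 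The same computation applied to the ordinary augmentations $\rho_i=[w_j;\chi_i,w(\chi_i)]$ of the continuous family gives $\rho_i=w_{\chi_i}$.

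With these identities the ABKP property propagates by induction on $j$, with Lemma \ref{1.2.6} as the engine. The base case is $\phi_0=X-a$, a linear polynomial and hence an ABKP for $w$, with $w_0=w_{\phi_0}$. Suppose $\phi_j$ is an ABKP for $w$. If $w_j\to w_{j+1}$ is an ordinary augmentation, then $\mathbf{A}_j=\emptyset$ and
\begin{align*}
 \Phi(w_{\phi_j},w)=\Phi(w_j,w)=\Phi(w_j,w_{j+1})=[\phi_{j+1}]_{w_j}
\end{align*}
by Corollary \ref{1.1.16} and Remark \ref{1.1.20}, so Lemma \ref{1.2.6} makes $\phi_{j+1}$ an ABKP for $w$ with $\delta(\phi_j)<\delta(\phi_{j+1})$. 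If it is a limit augmentation, then $\Phi(w_j,w)=\Phi(w_j,\rho_i)=[\chi_i]_{w_j}$ for every $i\in\mathbf{A}_j$, so Lemma \ref{1.2.6} makes each $\chi_i$ an ABKP for $w$ with $\delta(\phi_j)<\delta(\chi_i)$; and for $i<i'$ in $\mathbf{A}_j$ the identity $\Phi(\rho_i,w)=\Phi(\rho_i,\rho_{i'})=[\chi_{i'}]_{\rho_i}$ (Corollary \ref{1.1.16}(ii), Remark \ref{1.1.18}(ii)) together with $\rho_i=w_{\chi_i}$ gives $\delta(\chi_i)<\delta(\chi_{i'})$. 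Thus every $\chi_i$ is an ABKP and $\delta$ is strictly increasing along $\mathbf{A}_j$.

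The genuinely hard point, and the one I expect to be the main obstacle, is the MacLane--Vaqui\'e limit key polynomial $\phi_{j+1}$ of a limit step. Since $\deg\phi_{j+1}=m_\infty>m=\deg\chi_i$, this $\phi_{j+1}$ lies in no set $\Phi(\rho_i,w)$, so Lemma \ref{1.2.6} cannot reach it and its being an ABKP for $w$ must be proved by hand. Concretely I would show that $\delta(\chi_i)<\delta(\phi_{j+1})$ for all $i\in\mathbf{A}_j$ and, more delicately, that $\delta(f)<\delta(\phi_{j+1})$ for every $f$ with $\deg f<m_\infty$. The mechanism is that each such $f$ is $\mathcal{W}_j$-stable by the minimality of $m_\infty$, so its roots cannot trail the pseudo-convergent behaviour exhibited by the unstable $\phi_{j+1}$; turning $\mathcal{W}_j$-stability into an effective bound on $\delta(f)=\max\{\overline{w}(X-\alpha):f(\alpha)=0\}$ through the limit structure of $(\rho_i)_{i\in\mathbf{A}_j}$ is the technical heart of the proof, and I would isolate it as a standalone lemma. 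Granting this, the truncation identity $w_{j+1}=w_{\phi_{j+1}}$ follows just as in the first paragraph, now from the limit-augmentation formula and the $\mathcal{W}_j$-stability of the expansion coefficients, so the induction continues; and the slot $\vartheta_j=\mathbf{A}_j$ is genuinely a limit slot without last element (Definition \ref{1.1.14}(i)), matching Definition \ref{1.1.22}.

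It remains to assemble the three conditions. Strict monotonicity of $\delta$ along the full index set $\Delta$, established in the previous two paragraphs, is exactly condition (i), the distinctness of the $\delta(Q_i)$, and it makes the order $Q_i<Q_{i'}\iff\delta(Q_i)<\delta(Q_{i'})$ coincide with the index order of $\Delta$; condition (ii) then amounts to checking that this index order---a finite concatenation of the blocks $\{j\}\cup\mathbf{A}_j$---is a well-ordering, which I would reduce to the well-ordering of each continuous-family index $\mathbf{A}_j$. For the truncation-completeness condition (iii), given $f\in K[X]$ I would choose the largest $i\in\Delta$ with $\deg Q_i\le\deg f$ and verify $w_{Q_i}(f)=w(f)$ using the identities $w_j=w_{\phi_j}$, $\rho_i=w_{\chi_i}$, the equivalence $w_{Q}(f)=w(f)\iff Q\nmid_{w_Q}f$ from Theorem \ref{1.1.19} and Corollary \ref{1.1.16}, and the degree jumps recorded in Remark \ref{1.1.3}(iv). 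Finally the degree bookkeeping of Remark \ref{1.1.3}---common degree $\deg(w_j)$ on each block, strictly increasing across blocks, and $Q_0$ linear---shows the family is of the normalized form of Remark \ref{1.1.3}, completing the verification that $\{Q_i\}_{i\in\Delta}$ is a complete set of ABKPs for $w$ with the asserted block structure.
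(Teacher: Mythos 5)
Your overall architecture (truncation identities $w_j=w_{\phi_j}$, $\rho_i=w_{\chi_i}$, then Lemma \ref{1.2.6} to propagate the ABKP property and the monotonicity of $\delta$) matches the paper's, but your proof has a genuine gap exactly where you flag it: the MacLane--Vaqui\'e limit key polynomial $\phi_{j+1}$ of a limit step. You write that its being an ABKP ``must be proved by hand'' via a standalone lemma converting $\mathcal{W}_j$-stability of every $f$ with $\deg f<m_\infty$ into the bound $\delta(f)<\delta(\phi_{j+1})$, and then you proceed ``granting this.'' That lemma is never proved, and the route you sketch (controlling the roots of stable polynomials against the pseudo-convergent behaviour of the family) is substantially harder than what is needed. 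Note also that the target inequality $\delta(f)<\delta(\phi_{j+1})$ for all $f$ of smaller degree \emph{is} the definition of $\phi_{j+1}$ being an ABKP, so your ``concrete'' reformulation does not actually reduce the problem.

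The paper closes this in one line with Theorem \ref{2.1.2}: $\phi_{j+1}$ is a key polynomial of \emph{minimal degree} for the limit augmentation $w_{j+1}$ (this is \cite[Corollary 7.13]{EN}, quoted right after the definition of limit augmentation), and $w_{j+1}(\phi_{j+1})=\gamma_{j+1}=w(\phi_{j+1})$ gives $\phi_{j+1}\notin\Phi(w_{j+1},w)$; hence case (ii) of Theorem \ref{2.1.2} yields simultaneously that $\phi_{j+1}$ is an ABKP for $w$ and that $w_{j+1}=w_{\phi_{j+1}}$. In other words, you should run your own first-paragraph argument one level up, at $w_{j+1}$ rather than inside the family $\mathcal{W}_j$; the stability analysis you propose is unnecessary. (The paper in fact applies this observation uniformly to every $\phi_j$, $0\le j\le N$, at the outset, which also settles $\delta(Q_i)<\delta(\phi_{j+1})$ for $i\in\mathbf{A}_j$ directly from the ABKP definition since $\deg Q_i<\deg\phi_{j+1}$.) Two smaller points: in the completeness check, ``the largest $i\in\Delta$ with $\deg Q_i\le\deg f$'' need not exist when $f$ falls in a limit block ($\mathbf{A}_j$ has no last element and all $Q_i$, $i\in\Delta_j$, share one degree); the correct statement is that $f$, having degree below $m_\infty$, is $\mathcal{W}_j$-stable, so $w_{Q_{i'}}(f)=w(f)$ for a suitable $i'\in\mathbf{A}_j$ by Corollary \ref{1.1.16}. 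And your computation of $w_j=w_{\phi_j}$, while correct, is precisely case (ii) of Theorem \ref{2.1.2}, which you could simply cite.
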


It is known that if  $\{Q_i\}_{i\in\Delta}$ is a complete set of ABKPs for $w,$ then $w$ is a valuation-transcendental extension of $v$ to $K(X)$ if and only if $\Delta$ has a last element, say, N, and then 	 $w=w_{Q_N}$ (see \cite[Theorem 5.6]{MMS}). Therefore, as an immediate consequence of Theorems \ref{1.1.11} and \ref{1.1.12}, we have the following result.
\begin{corollary}
	Let $(K,v)$ and $(K(X),w)$ be as above. Then the following are equivalent:
	\begin{enumerate}[(i)]
		\item The extension $w$ is valuation-transcendental.
		\item There exist a complete set  $\{Q_i\}_{i\in\Delta}$ of ABKPs for $w$ such that $\Delta$ has a last element.
		\item  The extension $w$ is the last valuation of a complete finite MLV chain.
	\end{enumerate}
\end{corollary}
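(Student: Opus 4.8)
The plan is to deduce the corollary by assembling the two main theorems of the paper together with the cited Theorem 5.6 of \cite{MMS}. I would establish the equivalence (i) $\Leftrightarrow$ (ii) first, and then close the loop through (ii) $\Leftrightarrow$ (iii), the latter pair of implications being nothing but Theorems~\ref{1.1.11} and \ref{1.1.12}.

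For (i) $\Rightarrow$ (ii): by Theorem 1.1 of \cite{NS}, $w$ admits a complete set $\{Q_i\}_{i\in\Delta}$ of ABKPs, which we may take to satisfy the normalization of Remark~\ref{1.1.3}. Since $w$ is valuation-transcendental, \cite[Theorem 5.6]{MMS} forces $\Delta$ to have a last element, so that (ii) holds. For (ii) $\Rightarrow$ (i): given a complete set whose $\Delta$ has a last element, the same \cite[Theorem 5.6]{MMS} yields immediately that $w$ is valuation-transcendental. I emphasize that only the two directions of the biconditional in Theorem 5.6 are used, so no appeal to the invariance of $\lvert I\rvert$ and of the degrees $\deg Q_i$ under the choice of complete set is required.

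For (ii) $\Rightarrow$ (iii): this is precisely Theorem~\ref{1.1.11}, which, starting from a complete set whose $\Delta$ has last element $N$, constructs the complete finite MLV chain \eqref{1.13} terminating in $w=w_N$. For (iii) $\Rightarrow$ (ii): this is precisely Theorem~\ref{1.1.12}, which, from a complete finite MLV chain \eqref{1.14} ending at $w=w_N$, produces a complete set $\{Q_i\}_{i\in\Delta}$ with $\Delta=\bigcup_{j\in I}\Delta_j$ indexed by the finite set $I=\{0,1,\ldots,N\}$. Because the chain terminates at $w_N$, there is no essential continuous family of augmentations of $w_N$ occurring in it, whence $\vartheta_N=\emptyset$, i.e.\ $\Delta_N=\{N\}$; thus $N$ is the last element of $\Delta$ and (ii) follows.

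Since the substantive content is carried entirely by Theorems~\ref{1.1.11}, \ref{1.1.12} and \cite[Theorem 5.6]{MMS}, I expect no genuine obstacle: the statement is a true corollary. The only step demanding a line of care is confirming, in (iii) $\Rightarrow$ (ii), that the terminal block $\Delta_N$ collapses to the singleton $\{N\}$, so that $\Delta$ genuinely possesses a last element.
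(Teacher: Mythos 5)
Your proposal is correct and follows exactly the paper's route: the paper also obtains (i) $\Leftrightarrow$ (ii) from \cite[Theorem 5.6]{MMS} and (ii) $\Leftrightarrow$ (iii) from Theorems~\ref{1.1.11} and \ref{1.1.12}, treating the corollary as an immediate consequence. Your extra observation that $\vartheta_N=\emptyset$ in the direction (iii) $\Rightarrow$ (ii) is a worthwhile detail the paper leaves implicit, and it is consistent with the paper's earlier remark that $\Delta$ has a last element precisely when $I$ is finite and $\Delta_N=\{N\}$.
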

\begin{definition}
	A valuation $w$ on $K(X)$ is said to be {\bf inductive} if it is the last valuation of a  complete finite MLV chain, all whose
	 augmentations are ordinary.
\end{definition}

The following result is a consequence of   Theorem \ref{1.1.11} and Theorem \ref{1.1.12}.
\begin{corollary}\label{1.1.21}
	Let $(K,v)$ be a valued field and  $w$  an extension of $v$ to $K(X).$
	Then $w$ is inductive if and only if it admits a finite complete set of ABKPs.
\end{corollary}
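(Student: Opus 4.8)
The plan is to derive this corollary as a direct consequence of Theorems \ref{1.1.11} and \ref{1.1.12}, using one elementary observation: a finite totally ordered set with no last element must be empty. Throughout I will rely on the standing convention that every complete set $\{Q_i\}_{i\in\Delta}$ of ABKPs satisfies the structural properties of Remark \ref{1.1.3}; in particular $\Delta=\bigcup_{j\in I}\Delta_j$ with $\Delta_j=\{j\}\cup\vartheta_j$, where each $\vartheta_j$ is either empty or an ordered set without last element. The entire argument amounts to showing that the three conditions ``$w$ is inductive'', ``$\Delta$ is finite'', and ``$\vartheta_j=\emptyset$ for every $j$'' are mutually equivalent, since the last of these is precisely the dividing line between ordinary and limit augmentations supplied by the two theorems.

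For the forward implication I would start from the assumption that $w$ is inductive, i.e.\ $w=w_N$ is the last valuation of a complete finite MLV chain as in \eqref{1.14} whose every step $w_j\to w_{j+1}$ is ordinary. Applying Theorem \ref{1.1.12} yields a complete set $\{Q_i\}_{i\in\Delta}$ of ABKPs for $w$ with $I=\{0,1,\ldots,N\}$ and $\Delta_j=\{j\}\cup\mathbf{A}_j$. Since each augmentation is ordinary, part (ii) of that theorem forces $\vartheta_j=\emptyset$ for every $j<N$; together with $\vartheta_N=\emptyset$ (which holds because $w=w_N$ is the terminal valuation, so $N$ is the last element of $\Delta$), this gives $\Delta_j=\{j\}$ for all $j$ and hence $\Delta=\{0,1,\ldots,N\}$ is finite. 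Thus $w$ admits a finite complete set of ABKPs.

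For the converse I would begin with a finite complete set $\{Q_i\}_{i\in\Delta}$ of ABKPs for $w$. Since $\Delta$ is nonempty (it contains $Q_0$) and finite, $I$ is finite, so $I=\{0,\ldots,N\}$, and each $\vartheta_j$ is finite. Here the key elementary step enters: by Remark \ref{1.1.3} any nonempty $\vartheta_j$ has no last element, yet a finite nonempty totally ordered set always has one, so necessarily $\vartheta_j=\emptyset$ for every $j$. In particular $\vartheta_N=\emptyset$, so $N$ is the last element of $\Delta$ and Theorem \ref{1.1.11} applies, producing the complete finite MLV chain \eqref{1.13} ending at $w$; part (i) of that theorem then identifies each step $w_j\to w_{j+1}$ (where $\vartheta_j=\emptyset$) as an ordinary augmentation. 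Hence $w$ is inductive.

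The proof is mostly bookkeeping once the two main theorems are available, so I do not anticipate a serious obstacle; the only genuine content is the translation ``$\Delta$ finite $\iff$ all $\vartheta_j$ empty $\iff$ all augmentations ordinary''. The one point that deserves explicit care is the backward direction, where one must ensure that the \emph{given} finite complete set may be taken to satisfy the structural dichotomy of Remark \ref{1.1.3} (so that ``finite and without last element'' forces emptiness); this is exactly what our standing convention guarantees, but it should be invoked rather than left implicit.
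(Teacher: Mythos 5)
Your proposal is correct and follows exactly the route the paper intends: the paper offers no written proof beyond declaring the corollary a consequence of Theorems \ref{1.1.11} and \ref{1.1.12}, and your argument is precisely the bookkeeping that deduction requires, with the one genuinely needed observation (a finite $\vartheta_j$ with no last element is empty, so ``$\Delta$ finite'' $\iff$ ``all $\vartheta_j=\emptyset$'' $\iff$ ``all augmentations ordinary'') correctly identified and justified via the standing convention of Remark \ref{1.1.3}.
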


\bigskip

	\section{Proof of Main Results}
	Let $(K,v)$ be a valued field and  $(\overline{K},\bar{v})$ be as before.  Let $w$ be an extension of $v$ to $K(X)$  and $\overline{w}$ be a common extension of $w$ and $\bar{v}$ to $\overline{K}(X).$ With notations and  definitions as in the previous section,  we first give some preliminary results which will be used to prove the main results. 

In the following result we recall some basic properties of ABKPs for  $w$   (see  Proposition 3.8, Corollary  3.13 and Theorem 6.1 of  \cite{JN1}).
\begin{proposition}\label{2.1.6}
	For  ABKPs, $Q$ and $Q'$ for $w$ the following holds:
	\begin{enumerate}[(i)]
		\item If  $\delta(Q)<\delta(Q'),$ then $w_Q(Q')<w(Q').$ 
		\item If $\deg Q = \deg Q',$ then $$w(Q)<w(Q')\iff w_Q(Q')<w(Q')\iff \delta(Q)<\delta(Q').$$ Hence $Q'\in\Phi(w_Q,w)$ in this case.
		\item  Suppose that  $\delta(Q)<\delta(Q').$ For any polynomial $f\in K[X],$  we have $$w_Q(f)\leq w_{Q'}(f)\leq w(f).$$ Moreover,
		  if $w_{Q'}(f)<w(f),$ then $w_Q(f)<w_{Q'}(f).$	
		\item If $Q'\in \Phi(w_Q,w),$ then $Q$ and $Q'$ are key polynomials for $w_Q.$ Moreover, $w_{Q'}=[w_Q; Q',w(Q')].$
	\end{enumerate}
\end{proposition}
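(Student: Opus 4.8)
The plan is to isolate a single link between the invariant $\delta$ and the truncations $w_{Q}$, prove it once, and then read off the four assertions as bookkeeping. Two elementary observations are used throughout. First, directly from the definition of the $Q$-truncation and the ultrametric inequality, $w_{Q}(f)\le w(f)$ for every $f\in K[X]$; and if $\deg f<\deg Q$ then the $Q$-expansion of $f$ is the single term $f$, so $w_{Q}(f)=w(f)$. In particular every polynomial in $\Phi(w_{Q},w)$ has degree at least $\deg Q$. Second, $\delta(Q)<\delta(Q')$ forces $\deg Q'\ge\deg Q$, since otherwise the defining property of the ABKP $Q$ would give $\delta(Q')<\delta(Q)$.

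The crux is the following equivalence, which I would establish first: for an ABKP $Q$ for $w$ and any $f\in K[X]$,
\[
w_{Q}(f)<w(f)\iff \delta(f)>\delta(Q).
\]
Granting $w_{Q}<w$, Theorem~\ref{1.1.19} lets me pick $F\in\Phi(w_{Q},w)$, which is a key polynomial for $w_{Q}$ with $w_{Q}(f)=w(f)$ exactly when $F\nmid_{w_{Q}}f$, and Lemma~\ref{1.2.6} gives $\delta(F)>\delta(Q)$; thus the displayed equivalence is the same as $F\mid_{w_{Q}}f\iff\delta(f)>\delta(Q)$. This is the step I expect to be the main obstacle, because it requires translating the hypothesis on $\delta$, phrased through the proximities $\overline{w}(X-\beta)$ of roots $\beta$ in $\overline{K}$, into the $w_{Q}$-divisibility relation. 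Concretely I would use the multiplicativity $\delta(fg)=\max\{\delta(f),\delta(g)\}$ to reduce to irreducible $f$, and a minimal-pair description of $\overline{w}$ to compare the roots of $f$ with those of $Q$ and $F$: a root of $f$ closer to the defining element than any root of $Q$ is exactly what makes $w_{Q}$ undervalue $f$, and conversely $F\mid_{w_{Q}}f$ propagates $\delta(F)>\delta(Q)$ to $\delta(f)>\delta(Q)$.

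With the equivalence in hand the four parts follow. Part (i) is the case $f=Q'$. For part (ii), when $\deg Q=\deg Q'$ the $Q$-expansion of $Q'$ is $Q'=Q+(Q'-Q)$ with $\deg(Q'-Q)<\deg Q$, whence $w_{Q}(Q')=\min\{w(Q),w(Q'-Q)\}$; a one-line ultrametric computation gives $w(Q)<w(Q')\iff w_{Q}(Q')<w(Q')$, the remaining equivalence with $\delta(Q)<\delta(Q')$ is the displayed link, and since no polynomial of degree $<\deg Q$ lies in $\Phi(w_{Q},w)$, the inequality $w_{Q}(Q')<w(Q')$ exhibits $Q'\in\Phi(w_{Q},w)$. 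For part (iv), $Q$ is a key polynomial for $w_{Q}$ by the recalled fact that an ABKP is a key polynomial of minimal degree for its own truncation, while $Q'$ is one by Theorem~\ref{1.1.19}; and writing the $Q'$-expansion $f=\sum_{i}f_{i}Q'^{i}$ with $\deg f_{i}<\deg Q'=\deg\Phi(w_{Q},w)$ forces $w_{Q}(f_{i})=w(f_{i})$, so $[w_{Q};Q',w(Q')](f)$ and $w_{Q'}(f)$ have identical expansions, giving $w_{Q'}=[w_{Q};Q',w(Q')]$. Finally part (iii) follows from the composition rule $(w_{Q'})_{Q}=w_{Q}$ (valid because $w_{Q'}$ and $w$ agree below degree $\deg Q$): this yields the chain $w_{Q}=(w_{Q'})_{Q}\le w_{Q'}\le w$, and the strictness statement is the displayed equivalence applied to the ABKP-truncation $w_{Q'}$, lifted through this composition.
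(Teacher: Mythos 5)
First, a point of comparison: the paper does not prove Proposition \ref{2.1.6} at all --- it is explicitly presented as a recollection of Proposition 3.8, Corollary 3.13 and Theorem 6.1 of \cite{JN1}, so there is no internal argument to measure yours against. Judged on its own terms, your proposal is an organizational reduction rather than a proof. Its entire weight rests on the asserted equivalence $w_Q(f)<w(f)\iff\delta(f)>\delta(Q)$, and for this step --- which you yourself flag as ``the main obstacle'' --- you offer only a statement of intent (``I would use \dots\ a minimal-pair description of $\overline{w}$ to compare the roots''). That equivalence is essentially the content of the results of \cite{JN1} that the paper cites; establishing it requires the quantitative root-comparison machinery (minimal pairs, the behaviour of $\overline{w}(X-\beta)$ under $w_Q$-divisibility, and the relation between $\delta$ and the values $w_Q(\partial_b f)$), none of which appears in your text. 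In particular the implication $F\mid_{w_Q}f\Rightarrow\delta(f)>\delta(Q)$, which your forward direction needs, is carried entirely by the word ``propagates.'' So the core of the proposition is not proved.

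The bookkeeping surrounding the equivalence is mostly sound: the derivations of (i), (ii) and (iv) from it check out (the ultrametric computation in (ii), the minimality argument showing $Q'\in\Phi(w_Q,w)$, and the identification of the two expansions in (iv) are all correct). Part (iii), however, has a second, independent gap. After establishing $(w_{Q'})_Q=w_Q$, your strictness claim requires applying the displayed equivalence with $w_{Q'}$ in place of $w$, and hence comparing the invariant $\delta$ computed from an extension of $w_{Q'}$ to $\overline{K}(X)$ with the one computed from $\overline{w}$. These are a priori different invariants, and the phrase ``lifted through this composition'' does not explain how $\delta_{w_{Q'}}(f)>\delta_{w_{Q'}}(Q)$ is to be extracted from $\delta(f)>\delta(Q')>\delta(Q)$. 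Both gaps sit exactly where one must either cite \cite{JN1}, as the paper does, or reproduce a substantial part of its arguments.
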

The following result gives a comparison between key polynomials and ABKPs.
\begin{theorem}[Theorem 2.17, \cite{Ma}]\label{2.1.2}
	Suppose that $w'<w$ and $Q$ is a key polynomial for $w'.$ Then $Q$ is an ABKP polynomial for $w$ if and only if it satisfies one of the following two conditions:
	\begin{enumerate}[(i)]
		\item $Q\in \Phi(w',w),$
		\item $Q\notin \Phi(w',w)$ and $\deg Q=\deg w'.$
	\end{enumerate}
	In the first case $w_Q=[w'; Q, w(Q)].$ In the second case $w_Q=w'.$
\end{theorem}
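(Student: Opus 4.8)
The plan is to route everything through the $Q$-truncation $w_Q$ and the equivalence
\[
Q \text{ is an ABKP for } w \iff w_Q \text{ is a valuation and } Q \text{ is a key polynomial of minimal degree for } w_Q .
\]
One implication is already available: an ABKP $Q$ for $w$ gives a valuation $w_Q$ for which $Q$ is a minimal-degree key polynomial, by \cite[Proposition 2.6]{NS} and \cite[Proposition 2.10]{Ma}. I would first record its converse, so that checking the ABKP property reduces to \emph{identifying} $w_Q$. A second preliminary observation is that, because $Q$ is assumed to be a key polynomial for $w'$, the $Q$-expansion computes $w'$ termwise; comparing this with the definition of $w_Q$ and using $w'\le w$ gives $w'\le w_Q\le w$. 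These two facts turn the whole statement into a question of where $w_Q$ sits between $w'$ and $w$.

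For the implication $(\Leftarrow)$ I would treat the two cases separately and obtain the asserted formulas for $w_Q$ along the way. If $Q\in\Phi(w',w)$, then every coefficient $f_i$ of a $Q$-expansion has $\deg f_i<\deg Q=\deg\Phi(w',w)$, so $Q\nmid_{w'}f_i$ by $w'$-minimality and Theorem \ref{1.1.19} yields $w'(f_i)=w(f_i)$; substituting into the definitions gives $w_Q=[w';Q,w(Q)]$. By \cite[Corollary 7.3]{EN}, $Q$ is then a minimal-degree key polynomial for this ordinary augmentation, hence an ABKP for $w$. If instead $Q\notin\Phi(w',w)$ and $\deg Q=\deg w'$, the same computation (now $\deg Q=\deg w'$ forces $w'(Q)=w(Q)$ and $w'(f_i)=w(f_i)$) gives $w_Q=w'$; since $\deg Q=\deg w'=\deg(w_Q)$, $Q$ is a minimal-degree key polynomial for $w_Q=w'$ and so an ABKP for $w$.

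For the implication $(\Rightarrow)$ I would start from $Q$ being a minimal-degree key polynomial of the valuation $w_Q$, so $\deg Q=\deg(w_Q)$ and $w'\le w_Q\le w$. If $w'=w_Q$, then $\deg Q=\deg w'$ and $w'(Q)=w_Q(Q)=w(Q)$, so $Q\notin\Phi(w',w)$ and we are in case (ii). If $w'<w_Q$, I would invoke the MacLane--Vaqui\'e dichotomy for the step $w'\to w_Q$: it is either an ordinary augmentation by some $\phi\in\Phi(w',w_Q)$ with $\deg\phi=\deg(w_Q)$, or a limit augmentation, in which case $Q$ would be a limit key polynomial of a continuous family based on $w'$ and hence could not be a key polynomial for $w'$, contradicting the hypothesis. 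Thus the step is ordinary; then $\deg\phi=\deg Q$, and as $Q$ and $\phi$ are both minimal-degree key polynomials for $w_Q$ they are $w_Q$-equivalent. Since $w_Q$ agrees with $w'$ below degree $\deg Q$, this equivalence descends to $Q\thicksim_{w'}\phi$, whence $Q\in[\phi]_{w'}=\Phi(w',w_Q)=\Phi(w',w)$ by Corollary \ref{1.1.16}, putting us in case (i).

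The main obstacle is the forward direction's use of the structural dichotomy for a single augmentation step $w'<w_Q$, together with the fact that a MacLane--Vaqui\'e limit key polynomial over $w'$ is never a key polynomial for $w'$ itself; this is exactly where the hypothesis ``$Q$ is a key polynomial for $w'$'' is indispensable, since it is what excludes the limit case and pins the step down to an ordinary augmentation. Supporting this, the one technical input I would have to secure at the outset is the converse of the displayed equivalence --- that a valuation $w_Q$ with $Q$ as a minimal-degree key polynomial forces $Q$ to be an ABKP for $w$ --- whose proof rests on the dictionary between the invariant $\delta$ and the truncations recorded in Proposition \ref{2.1.6}.
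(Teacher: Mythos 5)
First, a framing point: the paper does not prove this statement at all --- it is imported verbatim as Theorem 2.17 of the cited reference \cite{Ma}, so there is no internal proof to compare yours against, and I can only assess your argument on its own terms. Your reduction to identifying the truncation $w_Q$, the inequality $w'\le w_Q\le w$, and the whole backward implication (including the two formulas $w_Q=[w';Q,w(Q)]$ and $w_Q=w'$) are sound, modulo the ``converse equivalence'' you defer to the end. But that deferred lemma --- that if $w_Q$ is a valuation admitting $Q$ as a key polynomial of minimal degree then $Q$ is an ABKP for $w$ --- cannot be extracted from Proposition \ref{2.1.6} as you suggest: every item of that proposition \emph{assumes} its inputs are ABKPs, so invoking it here is circular. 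Producing $\delta(f)<\delta(Q)$ for all $f$ with $\deg f<\deg Q$ from the stated hypotheses on $w_Q$ is the technical heart of the matter and needs its own argument; it is essentially where \cite{Ma} spends its effort.

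The more serious gap is in the forward direction when $w'<w_Q$. The ``dichotomy'' you invoke --- that the passage from $w'$ to $w_Q$ is either a single ordinary augmentation with $\deg\phi=\deg(w_Q)$ or a single limit augmentation of a family based at $w'$ --- is not a theorem for an arbitrary pair of comparable valuations: in general $w_Q$ may only be reachable from $w'$ through a chain of several augmentations of strictly increasing degree, a case your argument never excludes and in which neither (i) nor (ii) would hold. The hypothesis that $Q$ is a key polynomial for $w'$ is exactly what must rule this scenario out, but you only deploy it against the limit case. A more direct route: if $w'(Q)<w(Q)$, then by Theorem \ref{1.1.19} any $\phi\in\Phi(w',w)$ satisfies $\phi\mid_{w'}Q$, and since $\phi$ and $Q$ are both key polynomials for $w'$ this forces $\phi\sim_{w'}Q$, hence $Q\in[\phi]_{w'}=\Phi(w',w)$ by Corollary \ref{1.1.16}; otherwise $w'(Q)=w(Q)$, and one must still prove $\deg Q=\deg(w')$, which your sketch does not address. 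Finally, a small but real error: two key polynomials of minimal degree for the same valuation need not be equivalent under that valuation ($X$ and $X+1$ for a Gauss valuation already fail this), so your step ``$Q$ and $\phi$ are both minimal-degree key polynomials for $w_Q$, hence $w_Q$-equivalent'' is false as stated; the desired $Q\sim_{w'}\phi$ must instead come from the explicit description of the minimal-degree key polynomials of an ordinary augmentation.
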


	\begin{proof}[Proof of Theorem \ref{1.1.11}]
		Let $\Lambda=\{Q_i\}_{i\in\Delta}$ be a complete set of ABKPs for $w$ with $N$ the last element of $\Delta.$ If $N=0,$  then $w=w_{Q_0}$ is a depth zero valuation and the result holds trivially.
		
		 Assume now that $N\geq 1.$ Then by Remark \ref{1.1.3} (i), $\Delta=\bigcup_{j=0}^{N}\Delta_j,$ where  $\Delta_j=\{j\}\cup\vartheta_{j}$ and $\vartheta_j$ is either  empty or an ordered set without a last element. Since for each $i\in\Delta,$ $Q_i$ is an ABKP for $w,$ so $w_{Q_i}$ is a valuation on $K(X)$ and we  denote it by $w_i.$ Since $Q_i\notin\Phi(w_i,w),$  Theorem \ref{2.1.2} shows that $Q_i$ is a key polynomial of minimal degree for $w_i.$ Hence, $\deg (w_i)=\deg Q_i$ for all $i\in\Delta.$

			Suppose first that $\vartheta_j=\emptyset$ for some $0\leq j\leq N-1.$
			We first show that $Q_{j+1}\in\Phi(w_j,w).$ Since $\delta(Q_j)<\delta(Q_{j+1}),$  Proposition \ref{2.1.6} (i) shows that
			 $w_{j}(Q_{j+1})<w(Q_{j+1}).$ 
			 Therefore, it is enough to show that for any polynomial
			$f\in K[X]$ with $\deg f<\deg Q_{j+1},$ we have $w_j(f)=w(f).$ For such a polynomial $f,$ by 
			 definition of the complete set $\Lambda$ of ABKPs,  there exists some $Q_i\in\Lambda$  such that $\deg Q_i\leq\deg f$ and
			 \begin{align*}
			 	w_{i}(f)=w(f).
			 \end{align*}
		 Since $\deg Q_i< \deg Q_{j+1},$ we have $i< j+1.$  As $\vartheta_{j}=\emptyset,$ we necessarily have $i\leq j.$ By Proposition \ref{2.1.6} (iii) and the above equality, we have $$w_i(f)\leq w_j(f)\leq w(f)=w_i(f),$$ so that $w_j(f)=w(f).$ This proves that $Q_{j+1}\in \Phi(w_j,w).$ 
		 
		 By Proposition \ref{2.1.6} (iv), $Q_j$ and $Q_{j+1}$ are key polynomials for $w_j$ and \[w_{j+1}=[w_j;Q_{j+1}, w(Q_{j+1})] \] is an ordinary augmentation of $w_j.$ The condition of a an MLV chain is fulfilled because $\deg (w_j)=\deg Q_j<\deg Q_{j+1}=\deg(\Phi(w_j,w_{j+1})).$
		
		Assume now that $\vartheta_j\neq \emptyset$ for some $0\leq j<N.$ By Proposition \ref{2.1.6} (i), for each $i<i'$ in $\vartheta_{j}$ the ABKPs $Q_i,$ $Q_{i'}$ satisfies \[w_j(Q_i)<w(Q_i)=w_i(Q_i),  \, \, \, \, \, ~~ w_i(Q_{i'})<w(Q_{i'})=w_{i'}(Q_{i'}).\] 
		By  a completely analogous argument as above, we deduce that $Q_i\in\Phi(w_j,w),$ $Q_{i'}\in\Phi(w_i,w)$ and \[w_i=[w_j;Q_i,w(Q_i)], \, \, \, \, \, ~ ~ w_{i'}=[w_i,Q_{i'}, w(Q_{i'})]\] are ordinary augmentations. By Corollary   \ref{1.1.16},  we have  $\Phi(w_i,w)=\Phi(w_i,w_{i'})$ and 
	as $Q_i\notin\Phi(w_{i},w),$ so 
		\begin{align}\label{1.25}
			w_i(Q_{i})=w_{i'}(Q_i),~ \forall ~ i'>i~ \text{in}~\vartheta_{j},
		\end{align}
		which in view of  Theorem \ref{1.1.19}, implies that $Q_{i'}\not\sim_{w_i} Q_i.$
		Hence $\mathcal{W}_j:=(w_i)_{i\in\vartheta_j}$ is a continuous family of augmentations such that for each $i\in\vartheta_{j},$ $w_i$ is an ordinary augmentation of $w_j$ with respect to  $w(Q_i)$ and from (\ref{1.25}),  $Q_i$ is $\mathcal{W}_j$-stable  with stability degree $m_j.$ As $\vartheta_{j}\neq\emptyset,$ so  by Definition \ref{1.1.22}, $Q_{j+1}$ is a limit ABKP for $w.$  Since  $\delta(Q_i)<\delta(Q_{i'}),$ for every $i<i'\in\vartheta_{j},$ and    $w_{i'}(Q_{j+1})<w(Q_{j+1})$ for every $i'\in\vartheta_j,$ (because  $i'<j+1\in\Delta$), so  in view of Proposition \ref{2.1.6} (iii), we have   that $$w_i(Q_{j+1})<w_{i'}(Q_{j+1})~ \text{for every $i<i'\in \vartheta_{j}$}.$$  We now show that $ Q_{j+1}$ has  minimal degree with this property. For any polynomial $f\in K[X]$ with $\deg f<\deg Q_{j+1},$  there exists some ABKP $Q_{i_0}\in\Lambda$ such that $\deg Q_{i_0}\leq\deg f$ and 
		$	w_{i_0}(f)=w(f).$
	 Clearly,  $i_0<j+1.$
		 Therefore, for any $i,~i'\in\vartheta_{j}$ with $i_0<i<i',$ Proposition \ref{2.1.6} (iii) shows that \[w_{i_0}(f)\leq w_i(f)\leq w_{i'}(f)\leq w(f)=w_{i_0}(f),\] so that $f$ is $\mathcal{W}_j$-stable. Therefore, $Q_{j+1}$ is a an MLV limit key polynomial for $\mathcal{W}_j.$ Now, since $\deg(\mathcal{W}_j)=\deg Q_j<\deg Q_{j+1},$ we see that $\mathcal{W}_j$ is an essential continuous family of augmentations of  $w_j.$ 
		 
		 Let $\gamma_{j+1}$ denote the value $w(Q_{j+1}).$ Clearly, $\gamma_{j+1}>w_i(Q_{j+1})$ for all $i\in\vartheta_{j}.$ Let \[\rho_{j+1}=[\mathcal{W}_j;Q_{j+1},\gamma_{j+1}]\] be the limit augmentation determined by these data. Since $Q_{j+1}$ is a key polynomial of minimal degree for $\rho_{j+1},$ Theorem \ref{2.1.2} shows that $\rho_{j+1}$ is the truncation \[\rho_{j+1}=w_{Q_{j+1}}=w_{j+1}.\] Thus, the step $w_j\longrightarrow w_{j+1}$ is a limit augmentation. The condition of a an MLV chain is fulfilled because $Q_j\notin\Phi(w_j,w)=\Phi(w_j,w_{j+1}),$ the last equality follows from Corollary \ref{1.1.16}.

 Clearly, $w_{N}=w,$  for if there exists some polynomial $f\in K[X]$ such that $w_{N}(f)<w(f),$ then as $\Lambda$ is a complete set, so $w_{i}(f)=w(f)$ for some $0\leq i<N.$ But this will imply that $w(f)=w_{i}(f)\leq w_{N}(f)<w(f).$
		
		Thus from the above arguments it follows that $$w_0\xrightarrow{Q_1,\gamma_1} w_1\xrightarrow{Q_2,\gamma_2}\cdots \longrightarrow w_{N-1}\xrightarrow{Q_N,\gamma_N} w_N=w,$$  where $w_i=w_{Q_i},$ $\gamma_i=w(Q_i)$ for  $0\leq i\leq N,$ is a an MLV chain  whose last valuation is $w_N=w,$ such that:
		\begin{itemize}
			\item if $\vartheta_{j}= \emptyset,$ then $w_j\longrightarrow w_{j+1}$ is an ordinary augmentation.
			\item if $\vartheta_{j}\neq \emptyset,$ then $w_j\longrightarrow w_{j+1}$ is a limit augmentation of an essential continuous family $\mathcal{W}_j=(w_i)_{i\in\vartheta_j}$ of augmentations of $w_j.$
		\end{itemize}
		Finally the chain is complete because $w_0=w_{Q_0},$ where $Q_0=X,$   defined by the pair $(0,w_0(X))$ and is a depth zero valuation.
		\end{proof}

	\begin{proof}[Proof of Theorem \ref{1.1.12}]
		Let $$	w_0\xrightarrow{\phi_1,\gamma_1} w_1\xrightarrow{\phi_2,\gamma_2}\cdots \longrightarrow w_{N-1}\xrightarrow{\phi_N,\gamma_N} w_N=w,$$ be a complete finite MLV chain of $w.$ If $N=0,$  then $w_0=w$ is a depth zero valuation and the result holds trivially.
		
		 Assume now that $N\geq 1.$
			Then each $\phi_j$ is a key polynomial for $w_j$ of minimal degree, i.e., $\deg \phi_j=\deg(w_j),$ and  $w_j(\phi_{j})=w(\phi_j).$   Therefore $\phi_j\notin\Phi(w_j,w)$ which in view of Theorem \ref{2.1.2}, implies that $\phi_j$ is an ABKP for $w$ and
			\begin{align}\label{1.26}
			 w_j=w_{\phi_j}~\text{for  $0\leq j\leq N.$ } 
		\end{align}
		Suppose first that $w_j\longrightarrow w_{j+1}$ is an ordinary augmentation. Then by definition of MLV chain of $w,$ we have  $\phi_{j+1}\in \Phi(w_j,w_{j+1})=\Phi(w_j,w),$ the last equality  by Corollary \ref{1.1.16}.
		 Now, keeping in mind (\ref{1.26}), this implies that
		 $\phi_{j+1}\in \Phi(w_{\phi_j},w)$ and hence from Lemma \ref{1.2.6}, it follows that $$\delta(\phi_j)<\delta(\phi_{j+1}).$$
		
		Assume now that $w_j\longrightarrow w_{j+1}$ is a limit augmentation. Then $\phi_{j+1}$  is a an MLV limit key polynomial for an essential continuous family (say) $\mathcal{W}_j$ of augmentations of $w_j.$ 
		Let $\mathcal{W}_j=(\rho_i)_{i\in \mathbf{A}_j},$  where $\mathbf{A}_j$ is some totally ordered set without a last element and for each $i\in\mathbf{A}_j,$  $\rho_i=[w_j, \chi_i, \gamma_{i}]$ is an ordinary augmentation of $w_j$ with stability degree, (say) $m_j=\deg \phi_j=\deg \chi_i.$ Also, for all $i<i'\in\mathbf{A}_j,$ $\chi_{i'}$ is a key polynomial for $\rho_i$ such that $$\chi_{i'}\not\thicksim_{\rho_i}\chi_i,~ \text{i.e.,}~ \chi_{i'}\nmid_{\rho_i}\chi_i~\text{and}~ \rho_{i'}=[\rho_i; \chi_{i'}, \gamma_{i'}].$$  Since $\rho_i<w$ and $\chi_{i'}\nmid_{\rho_i} \chi_i,$ so by Theorem \ref{1.1.19}, $\rho_i(\chi_i)=w(\chi_i),$ which together with Corollary \ref{1.1.16}, gives 
		\begin{align}\label{1.24}
			\chi_i\notin\Phi(\rho_i,w)=\Phi(\rho_i,\rho_{i'})=[\chi_{i'}]_{\rho_i}.
		\end{align}
	Now by Remark \ref{1.1.18} (i), for each $i\in \mathbf{A}_j,$ $\chi_i$ is a key polynomial for $\rho_i$ of minimal degree, i.e.,  $\deg \chi_i=\deg(\rho_i),$  therefore keeping in mind that $\rho_i<w,$ equation (\ref{1.24}) in view of Theorem \ref{2.1.2},  implies that each $Q_i:=\chi_i$ is an ABKP for $w$ and
	\begin{align}\label{1.1}
	 \rho_i=w_{Q_i},~\text{for all $i\in\mathbf{A}_j$}.
	\end{align}
	  Hence for each $i<i'\in\mathbf{A}_j,$ $Q_i$ and $Q_{i'}$ are ABKPs for $w$ such that $$w(Q_i)=w_{Q_i}(Q_i)<w_{Q_{i'}}(Q_{i'})=w(Q_{i'})~ \text{and}~  \deg Q_i=\deg Q_{i'},$$ which in view of Proposition \ref{2.1.6} (ii), implies that $w_{Q_i}(Q_{i'})<w(Q_{i'}).$ Therefore, by Lemma \ref{1.2.6}, we get that  $$Q_{i'}\in\Phi(w_{Q_i}, w)~\text{and}~ \delta(Q_i)<\delta(Q_{i'})~\text{for every $i<i'\in \mathbf{A}_{j}$}.$$ As $\deg \phi_j=\deg Q_i,$ for every $i\in\mathbf{A}_{j}$ and $w(\phi_j)<w(Q_i),$ so again by Proposition \ref{2.1.6} (ii), we have that   $Q_i\in\Phi(w_{\phi_j}, w)$  and then  $$\delta(\phi_j)<\delta(Q_i)~\forall ~i\in\mathbf{A}_j,$$ follows from Lemma \ref{1.2.6}.   Since $\mathcal{W}_j$ is essential, so $\deg \phi_{j}=\deg Q_i<\deg \phi_{j+1},$ for every $i\in\mathbf{A}_j,$ this together with the fact that $\phi_{j+1}$ is an ABKP for $w,$ implies that $$\delta(\phi_{j})<\delta(\phi_{j+1}), ~ \delta(Q_i)<\delta(\phi_{j+1}) ~\text{and}~ \phi_{j+1}\notin \Phi(w_{\phi_j},w).$$
	
	For every $0\leq j\leq N,$ let $\Delta_j=\{j\}\cup\mathbf{A}_j,$  and $\Delta=\bigcup_{j=0}^{N}\Delta_j.$ We now set $Q_j:=\phi_j$ for all $j\in I=\{0,1,\ldots,N\},$ and  show that  $\Lambda=\{Q_i\}_{i\in\Delta}$ is a complete set of ABKPs for $w.$ Clearly, as shown above for every $i<i'\in\Delta,$ we have $\delta(Q_i)<\delta(Q_{i'}).$ Therefore,  the set $\Lambda$ is well-ordered with respect to the ordering given by: $Q_i< Q_{i'}$ if and only if  $\delta(Q_i)<\delta(Q_{i'})$ for every $i<i'\in \Delta.$ 
	 Let $f\in K[X]$ be any polynomial.  Then on using the fact that  $w_N=w$ and  (\ref{1.26}),  we get  $w_{Q_N}=w;$ that is $w_{Q_N}(f)=w(f).$ If $ \deg Q_N\leq \deg f,$ then 
	 we are done.  
	 
	 So suppose that $\deg Q_j\leq\deg f<\deg Q_{j+1},$ for some $j\in I\setminus\{N\}.$ If $w_j\longrightarrow w_{j+1}$ is an ordinary augmentation, then by
	  Remark \ref{1.1.20} (iii) and (iv),
	   we have that $f\notin\Phi(w_{Q_j},w_{Q_{j+1}}),$ which together with Corollary \ref{1.1.16}    implies that $f\notin\Phi(w_{Q_j},w)$ and consequently,  $w_{Q_j}(f)=w(f).$ Now if  $w_j\longrightarrow w_{j+1}$ is a limit augmentation, then as $Q_{j+1}$ is a $\mathcal{W}_j$-unstable polynomial of minimal degree, so we must have that $\rho_i(f)=\rho_{i'}(f)$ for all $i>i'\in\mathbf{A}_j,$  for some index $i'\in\mathbf{A}_j,$ which in view of (\ref{1.1}) implies that  
	  $w_{Q_i}(f)=w_{Q_{i'}}(f),$ i.e., $f\notin\Phi(w_{Q_{i'}},w_{Q_{i}})=\Phi(w_{Q_{i'}},w)$  (see Corollary \ref{1.1.16}).      Hence $w_{Q_{i'}}(f)=w(f),$ for $i'\in\mathbf{A}_j.$ 
	  Thus  $\{Q_i\}_{i\in\Delta}$ is  a complete set of ABKPs for $w$ such that   
	 \begin{itemize}
	 	\item if $w_j\longrightarrow w_{j+1}$ is an ordinary augmentation, then $\vartheta_j=\emptyset.$ 
	 	\item if $w_j\longrightarrow w_{j+1}$ is a limit augmentation, 
	 	with respect to an essential continuous family $\mathcal{W}_j=(\rho_i)_{i\in\mathbf{A}_j}$ of augmentations of $w_j,$ then $\vartheta_{j}=\mathbf{A}_j,$  $Q_i=\chi_i$ for all $i\in\mathbf{A}_j,$ and $Q_{j+1}=\phi_{j+1}$ is a limit ABKP for $w.$
	 \end{itemize}
	\end{proof}

	\section*{Acknowledgement}
		We would like to thank the anonymous referee for a careful reading and providing useful suggestions which led to an improvement in the presentation of the paper.
	The	research of  first author is supported by CSIR (Grant No.\  09/045(1747)/2019-EMR-I).

\end{document}